\newtheorem{thm}{Theorem}[section]
\newtheorem{lem}[thm]{Lemma}
\newtheorem{prop}[thm]{Proposition}
\newtheorem{defn}[thm]{Definition}
\theoremstyle{definition}
\newtheorem{rmk}{Remark}
\theoremstyle{remark}
\newcommand{\hsforall}{\hspace{1mm}\forall\hspace{1mm}}						 
\newcommand{\hsexists}{\hspace{1mm}\exists\hspace{1mm}} 					 
\newcommand{\res}{\operatorname*{Res}}                             
\renewcommand{\Re}{\operatorname*{Re}}                             
\renewcommand{\Im}{\operatorname*{Im}}                             
\renewcommand{\d}{\ensuremath{\,\mathrm{d}}}							         
\newcommand{\Mspacer}{\hspace{0.5mm}}                              
\newcommand{\M}[3]{#1_{#2\Mspacer#3}}                              
\newcommand{\Msup}[4]{#1_{#2\Mspacer#3}^{#4}}                      
\newcommand{\Msups}[5]{#1_{#2\Mspacer#3}^{#4\Mspacer#5}}           
\renewcommand{\geq}{\geqslant}                                     
\renewcommand{\leq}{\leqslant}                                     
\renewcommand{\epsilon}{\varepsilon}                               
\newcommand{\BE}{\begin{equation}}                                 
\newcommand{\EE}{\end{equation}}                                   
\newcommand{\BES}{\begin{equation*}}                               
\newcommand{\EES}{\end{equation*}}                                 
\newcommand{\BP}{\begin{pmatrix}}                                  
\newcommand{\EP}{\end{pmatrix}}                                    
\newcommand{\N}{\mathbb{N}}                                        
\newcommand{\R}{\mathbb{R}}                                        
\newcommand{\C}{\mathbb{C}}                                        
\def\clap#1{\hbox to 0pt{\hss#1\hss}}
\numberwithin{equation}{section}
\title{Evolution PDEs and augmented \\ eigenfunctions. Finite interval}
\author{
D. A. Smith$^1$ and A. S. Fokas$^2$
\\
\footnotesize $^1$ \emph{Corresponding author}, 
\footnotesize Department of Mathematics, University of Michigan, \\ \footnotesize Ann Arbor, MI 48109-1043 USA \\
\footnotesize email\textup{: \texttt{daasmith@umich.edu}} \\
\footnotesize $^2$ DAMTP, University of Cambridge, Cambridge, CB\textup{3} \textup{0}WA, UK
}
\begin{document}
\maketitle

\begin{abstract}
The so-called unified method expresses the solution of an initial-boundary value problem (IBVP) for an evolution PDE in the finite interval in terms of an integral in the complex Fourier (spectral) plane. Simple IBVP, which will be referred to as problems of type~I, can be solved via a classical transform pair. For example, the Dirichlet problem of the heat equation can be solved in terms of the transform pair associated with the Fourier sine series. Such transform pairs can be constructed via the spectral analysis of the associated spatial operator. For more complicated IBVP, which will be referred to as problems of type~II, there does \emph{not} exist a classical transform pair and the solution \emph{cannot} be expressed in terms of an infinite series. Here we pose and answer two related questions: first, does there exist a (non-classical) transform pair capable of solving a type~II problem, and second, can this transform pair be constructed via spectral analysis? The answer to both of these questions is positive and this motivates the introduction of a novel class of spectral entities. We call these spectral entities augmented eigenfunctions, to distinguish them from the generalised eigenfunctions described in the sixties by Gel'fand and his co-authors.
\end{abstract}

\subsubsection*{AMS MSC2010}35P10 (primary), 35C15, 35G16, 47A70 (secondary).

\section{Introduction} \label{sec:Intro}

The basis for the classical transform pairs used to solve initial-boundary value problems (IBVP) for linear evolution PDEs is the expansion of the initial datum in terms of appropriate eigenfunctions of the spatial differential operator. The transform pair diagonalises the associated differential operator in the sense of the classical spectral theorem. The main goal of this paper is to show that the unified method~\cite{Fok2007a,FP2001a,Smi2012a} yields an integral representation, like~\eqref{eqn:introIBVP.solution.2}, which in turn gives rise to a transform pair like~\eqref{eqn:introTrans.1.1}, and furthermore the elucidation of the spectral meaning of such new transform pairs leads to new results in spectral theory.

For concreteness, we illustrate the results with a pair of examples before describing the general cases each example typifies. Specifically, we study the following (IBVP) for the linearized Korteweg-de Vries (linearized KdV) equation:
\newline\noindent{\bf Problem 1}
\begin{subequations} \label{eqn:introIBVP.1}
\begin{align} \label{eqn:introIBVP.1:PDE}
q_t(x,t) + q_{xxx}(x,t) &= 0 & (x,t) &\in (0,1)\times(0,T), \\
q(x,0) &= f(x) & x &\in [0,1], \\
q(0,t) = q(1,t) &= 0 & t &\in [0,T], \\
q_x(1,t) &= q_x(0,t)/2 & t &\in [0,T].
\end{align}
\end{subequations}
\noindent{\bf Problem 2}
\begin{subequations} \label{eqn:introIBVP.2}
\begin{align} \label{eqn:introIBVP.2:PDE}
q_t(x,t) + q_{xxx}(x,t) &= 0 & (x,t) &\in (0,1)\times(0,T), \\
q(x,0) &= f(x) & x &\in [0,1], \\
q(0,t) = q(1,t) = q_x(1,t) &= 0 & t &\in [0,T].
\end{align}
\end{subequations}
It is shown in~\cite{FP2001a,Pel2005a,Smi2012a,Smi2013a} that these problems are well-posed and that their solutions can be expressed in the form
\BE \label{eqn:introIBVP.solution.2}
q(x,t) = \frac{1}{2\pi}\left\{\int_{\Gamma^+} + \int_{\Gamma_0}\right\} e^{i\lambda x + i\lambda^3t} \frac{\zeta^+(\lambda)}{\Delta(\lambda)}\d\lambda + \frac{1}{2\pi}\int_{\Gamma^-} e^{i\lambda(x-1) + i\lambda^3t} \frac{\zeta^-(\lambda)}{\Delta(\lambda)}\d\lambda,
\EE
where $\Gamma_0$ and $\Gamma^\pm$ are the contours shown on figure~\ref{fig:3o-cont}, with the regions to their left shaded, are defined by
\begin{align*}
\Gamma_0 &\mbox{ is the circular contour of radius } \frac{1}{2} \mbox{ centred at $0$}, \\
\Gamma^\pm &\mbox{ are the boundaries of the domains } \{\lambda\in\C^\pm:\Im(\lambda^3)>0 \mbox{ and } |\lambda|>1\},
\end{align*}
$\alpha$ is the root of unity $e^{2\pi i/3}$, $\hat{f}(\lambda)$ is the Fourier transform
\BE
\int_0^1 e^{-i\lambda x}f(x)\d x, \qquad \lambda\in\C
\EE
and $\zeta^\pm(\lambda)$, $\Delta(\lambda)$ are defined as follows for all $\lambda\in\C$:
\newline\noindent{\bf Problem 1}
\begin{subequations} \label{eqn:introIBVP.DeltaZeta.1}
\begin{align}
\Delta(\lambda) &= e^{i\lambda} + \alpha e^{i\alpha\lambda} + \alpha^2 e^{i\alpha^2\lambda} + 2(e^{-i\lambda} + \alpha e^{-i\alpha\lambda} + \alpha^2 e^{-i\alpha^2\lambda}),\\ \notag
\zeta^+(\lambda) &= \hat{f}(\lambda)(e^{i\lambda}+2\alpha e^{-i\alpha\lambda}+2\alpha^2 e^{-i\alpha^2\lambda}) + \hat{f}(\alpha\lambda)(\alpha e^{i\alpha\lambda}-2\alpha e^{-i\lambda}) \\ &\hspace{15em} + \hat{f}(\alpha^2\lambda)(\alpha^2 e^{i\alpha^2\lambda}-2\alpha^2e^{-i\lambda}), \\ \notag
\zeta^-(\lambda) &= -\hat{f}(\lambda)(2 + \alpha^2 e^{-i\alpha\lambda} + \alpha e^{-i\alpha^2\lambda}) - \alpha\hat{f}(\alpha\lambda)(2-e^{-i\alpha^2\lambda}) \\ &\hspace{15em} - \alpha^2\hat{f}(\alpha^2\lambda)(2-e^{-i\alpha\lambda}).
\end{align}
\end{subequations}
\noindent{\bf Problem 2}
\begin{subequations} \label{eqn:introIBVP.DeltaZeta.2}
\begin{align}
\Delta(\lambda) &= e^{-i\lambda} + \alpha e^{-i\alpha\lambda} + \alpha^2 e^{-i\alpha^2\lambda}, \\
\zeta^+(\lambda) &= \hat{f}(\lambda)(\alpha e^{-i\alpha\lambda}+\alpha^2 e^{-i\alpha^2\lambda}) - (\alpha\hat{f}(\alpha\lambda) + \alpha^2\hat{f}(\alpha^2\lambda))e^{-i\lambda}, \\
\zeta^-(\lambda) &= -\hat{f}(\lambda) - \alpha\hat{f}(\alpha\lambda) - \alpha^2\hat{f}(\alpha^2\lambda).
\end{align}
\end{subequations}
Note that the radius $\frac{1}{2}$ in the definition of $\Gamma_0$ is not directly related to the factor $\frac{1}{2}$ which appears in the boundary conditions. The radius was simply chosen sufficiently small to ensure that the contour encloses precisely one zero of $\Delta$.

\begin{figure}
\begin{center}
\includegraphics{LKdV-contours-02}
\caption{Contours for the linearized KdV equation.}
\label{fig:3o-cont}
\end{center}
\end{figure}

For evolution PDEs defined in the finite interval, $x\in[0,1]$, one may expect that the solution can be expressed in terms of an infinite series. However, it is shown in~\cite{Pel2005a,Smi2012a} that for generic boundary conditions this is \emph{impossible}. The solution \emph{can} be expressed in the form of an infinite series only for a particular class of boundary value problems; this class is characterised explicitly in~\cite{Smi2012a}. In particular, problem 2 does \emph{not} belong to this class, in contrast to problem 1 for which there exists the following alternative representation:
\BE \label{eqn:introIBVP.solution.1}
q(x,t) = \frac{1}{2\pi} \sum_{\substack{\sigma\in\overline{\C^+}:\\\Delta(\sigma)=0}} \int_{\Gamma_\sigma} e^{i\lambda x + i\lambda^3t} \frac{\zeta^+(\lambda)}{\Delta(\lambda)}\d\lambda + \frac{1}{2\pi} \sum_{\substack{\sigma\in\C^-:\\\Delta(\sigma)=0}} \int_{\Gamma_\sigma} e^{i\lambda(x-1) + i\lambda^3t} \frac{\zeta^-(\lambda)}{\Delta(\lambda)}\d\lambda,
\EE
where
$$
\Gamma_\sigma \mbox{ is the circular contour centered at } \sigma \mbox{ with radius } \frac{1}{2};
$$
the asymptotic formula for $\sigma$ is given in~\cite{Smi2013a}. By using the residue theorem, it is possible to express the right hand side of equation~\eqref{eqn:introIBVP.solution.1} in terms of an infinite series over $\sigma$.

We note that even for problems for which there does exist a series representation (like problem~1), the integral representation~\eqref{eqn:introIBVP.solution.2} has certain advantages. In particular, via an appropriate contour deformation, one may ensure that the integrand decays exponentially. This yields a rapidly converging solution representation suitable for an efficient numerical evaluation. In the case of initial data with explicitly computable Fourier transform, this is demonstrated in~\cite{FF2008a}, but the rapid convergence is not restricted to such data.

Generic IBVP for which there does \emph{not} exist an infinite series representation will be referred to as problems of type~II, in contrast to those problems whose solutions possess both an integral and a series representation, which will be referred to as problems of type~I,
\begin{quote}
existence of a series representation: type~I \\
existence of only an integral representation: type~II.
\end{quote}

\subsubsection*{Transform pair}

Simple IBVP for linear evolution PDEs can be solved via an appropriate transform pair. For example, the Dirichlet and Neumann problems of the heat equation on the finite interval can be solved with the transform pair associated with the Fourier-sine and the Fourier-cosine series, respectively. Similarly, the series that can be constructed using the residue calculations of the right hand side of equation~\eqref{eqn:introIBVP.solution.1} can be obtained directly via a classical transform pair, which in turn can be constructed via standard spectral analysis.

It turns out that the unified method provides an algorithmic way for constructing a transform pair tailored for a given IBVP. For example, the integral representation~\eqref{eqn:introIBVP.solution.2} gives rise to the following transform pair tailored for solving problems~1 and~2:
\begin{subequations} \label{eqn:introTrans.1.1}
\begin{align} \label{eqn:introTrans.1.1a}
f(x) &\mapsto F(\lambda): & F_\lambda(f) &= \begin{cases} \int_0^1 \phi^+(x,\lambda)f(x)\d x & \mbox{if } \lambda\in\Gamma^+\cup\Gamma_0, \\ \int_0^1 \phi^-(x,\lambda)f(x)\d x & \mbox{if } \lambda\in\Gamma^-, \end{cases} \\ \label{eqn:introTrans.1.1b}
F(\lambda) &\mapsto f(x): & f_x(F) &= \left\{ \int_{\Gamma_0} + \int_{\Gamma^+} + \int_{\Gamma^-} \right\} e^{i\lambda x} F(\lambda) \d\lambda, \qquad x\in[0,1],
\end{align}
where for problems~1 and~2 respectively, $\phi^\pm$ are given by
\begin{align} \notag
\phi^+(x,\lambda)   &= \frac{1}{2\pi\Delta(\lambda)} \left[ e^{-i\lambda x}(e^{i\lambda}+2\alpha e^{-i\alpha\lambda}+2\alpha^2 e^{-i\alpha^2\lambda}) \right. \\ &\hspace{2em} \left. + e^{-i\alpha\lambda x}(\alpha e^{i\alpha\lambda}-2\alpha e^{-i\lambda}) + e^{-i\alpha^2\lambda x}(\alpha^2 e^{i\alpha^2\lambda}-2\alpha^2e^{-i\lambda}) \right], \label{eqn:introTrans.1.1c} \\ \notag
\phi^-(x,\lambda)   &= \frac{-e^{-i\lambda}}{2\pi\Delta(\lambda)} \left[ e^{-i\lambda x}(2 + \alpha^2 e^{-i\alpha\lambda} + \alpha e^{-i\alpha^2\lambda}) + \alpha e^{-i\alpha\lambda x}(2-e^{-i\alpha^2\lambda}) \right. \\ &\hspace{14em} \left. + \alpha^2 e^{-i\alpha^2\lambda x}(2-e^{-i\alpha\lambda}) \right] \label{eqn:introTrans.1.1d}
\end{align}
and
\begin{align} \label{eqn:introTrans.1.1e}
\phi^+(x,\lambda)   &= \frac{1}{2\pi\Delta(\lambda)} \left[ e^{-i\lambda x}(\alpha e^{-i\alpha\lambda}+\alpha^2 e^{-i\alpha^2\lambda}) - (\alpha e^{-i\alpha\lambda x} + \alpha^2 e^{-i\alpha^2\lambda x})e^{-i\lambda} \right], \\ \label{eqn:introTrans.1.1f}
\phi^-(x,\lambda)   &= \frac{-e^{-i\lambda}}{2\pi\Delta(\lambda)} \left[ e^{-i\lambda x} + \alpha e^{-i\alpha\lambda x} + \alpha^2 e^{-i\alpha^2\lambda x} \right].
\end{align}
\end{subequations}
The alternative representation~\eqref{eqn:introIBVP.solution.1} gives rise to the following alternative transform pair tailored for solving problem~1:
\begin{align} \label{eqn:introTrans.1.2}
F(\lambda) &\mapsto f(x): & f^\Sigma_x(F) = \sum_{\substack{\sigma\in\C:\\\Delta(\sigma)=0}}\int_{\Gamma_\sigma} e^{i\lambda x}F(\lambda)\d\lambda,
\end{align}
where $F_\lambda(f)$ is defined by equations~\eqref{eqn:introTrans.1.1a},~\eqref{eqn:introTrans.1.1c} and~\eqref{eqn:introTrans.1.1d} and $\Gamma_\sigma$ is defined below~\eqref{eqn:introIBVP.solution.1}.

The validity of these transform pairs is established in section~\ref{sec:Transforms.valid}. The solution of problems~1 and~2 is then given by
\BE \label{eqn:introIBVP.solution.transform.1}
q(x,t) = f_x\left(e^{i\lambda^3t}F_\lambda(f)\right).
\EE

\subsubsection*{Spectral representation}

Suppose we seek traditional eigenfunctions of the spatial differential operator $S$ associated with the half-line Dirichlet problem for the heat equation, given by
\BE
(Sf)(x) = -f''(x), \qquad \hsforall f\in \mathcal{S}[0,\infty) \mbox{ such that } f(0) = 0,
\EE
where $\mathcal{S}[0,\infty)$ is the restriction of the Schwartz space of rapidly decaying functions to $[0,\infty)$. Then $-f''(x) = \lambda^2f(x)$ implies $f(x) = Ae^{i\lambda x} + Be^{-i\lambda x}$ and the boundary condition yields $B=-A$ hence
\BE
f(x) = A'\sin(\lambda x).
\EE
But, for $f\in S[0,\infty)$, we must have $A'=0$ so there are no nonzero eigenfunctions of $S$.

Instead, one must consider eigen\emph{functionals} or generalised eigenfunctions, following Gel'fand and coauthors~\cite{GS1967a,GV1964a}. Searching for functions $F\in(\mathcal{S}[0,\infty))'$, we find
\BE
F_\lambda(Sf) = \lambda^2F_\lambda(f)
\EE
holds for each $\lambda\in\R$, where
\BE
F_\lambda(f) = \frac{2}{\pi}\int_0^\infty\sin(\lambda x)f(x)\d x.
\EE
That is, the generalised eigenfunctions are precisely the (forward) transforms used to solve the corresponding IBVP. It is therefore reasonable to ask if the transform pair obtained through the unified transform method for IBVP on the finite interval has a similar spectral meaning.

Unfortunately, the concept of generalised eigenfunctions is inadequate for analysing the IBVP studied here because our problems are in general non-self-adjoint. Although the given formal differential operator is self-adjoint, the boundary conditions are in general not self-adjoint. In what follows, we introduce the notion of~\emph{augmented eigenfunctions}. Actually, in order to analyse type~I and type~II IBVP, we introduce two types of augmented eigenfunctions. Type~I are a slight generalisation of the eigenfunctions described by Gel'fand and Vilenkin and are also related with the notion of pseudospectra~\cite{ET2005a}. However, it appears that type~II eigenfunctions comprise a new class of spectral functionals.

\begin{defn} \label{defn:AugEig}
Let $I$ be an open real interval and let $C$ be a linear topological space of functions defined on the closure of $I$ with sufficient smoothness and decay conditions. Let $\Phi\subseteq C$ and let $L:\Phi\to C$ be a linear operator. Let $\gamma$ be an oriented contour in $\C$ and let $E=\{E_\lambda:\lambda\in\gamma\}$ be a family of functionals $E_\lambda\in C'$. Suppose there exist corresponding \emph{remainder} functionals $R_\lambda\in\Phi'$ and \emph{eigenvalues} $\lambda^n$ such that
\BE \label{eqn:defnAugEig.AugEig}
E_\lambda(L\phi) = \lambda^n E_\lambda(\phi) + R_\lambda(\phi), \qquad\hsforall\phi\in\Phi, \hsforall \lambda\in\gamma.
\EE

If
\BE \label{eqn:defnAugEig.Control1}
\int_\gamma e^{i\lambda x} R_\lambda(\phi)\d\lambda = 0, \qquad \hsforall \phi\in\Phi, \hsforall x\in I,
\EE
then we say $E$ is a family of \emph{type~\textup{I} augmented eigenfunctions} of $L$ up to integration along $\gamma$.

If
\BE \label{eqn:defnAugEig.Control2}
\int_\gamma \frac{e^{i\lambda x}}{\lambda^n}  R_\lambda(\phi)\d\lambda = 0, \qquad \hsforall \phi \in\Phi, \hsforall x\in I, 
\EE
then we say $E$ is a family of \emph{type \textup{II}~augmented eigenfunctions} of $L$ up to integration along $\gamma$.
\end{defn}
We note that the class of families of augmented eigenfunctions of a given operator is closed under union.

In the theory of pseudospectra it is required that the norm of the functional $R_\lambda(\phi)$ is small, whereas in our definition it is required that the integral of $\exp(i\lambda x)R_\lambda(\phi)$ along the contour $\gamma$ vanishes. Recall that the inverse transform of the relevant transform pair is defined in terms of a contour integral, thus the above definition is sufficient for our needs.

It will be shown in Section~\ref{sec:Spectral} that $\{F_\lambda:\lambda\in\Gamma_\sigma\hsexists\sigma\in\C:\Delta(\sigma)=0\}$ is a family of type~I augmented eigenfunctions of the differential operator representing the spatial part of problem~1 with eigenvalue $\lambda^3$. Similarly $\{F_\lambda:\lambda\in\Gamma_0\}$ is a family of type~I augmented eigenfunctions of the spatial operator in problem~\eqref{eqn:introIBVP.2}. However, $\{F_\lambda:\lambda\in\Gamma^+\cup\Gamma^-\}$ is a family of type~II augmented eigenfunctions.

\subsubsection*{Diagonalisation of the operator}

Our definition of augmented eigenfunctions, in contrast to the generalized eigenfunctions of Gel'fand and Vilenkin~\cite[Section 1.4.5]{GV1964a}, allows the occurence of remainder functionals. However, the contribution of these remainder functionals is eliminated by integrating over $\gamma$. Hence, integrating equation~\eqref{eqn:defnAugEig.AugEig} over $\gamma$ gives rise to a non-self-adjoint analogue of the spectral representation of an operator.

\begin{defn}
We say that $E=\{E_\lambda:\lambda\in\gamma\}$ is a \emph{complete} family of functionals $E_\lambda\in C'$ if
\BE \label{eqn:defnGEInt.completecriterion}
\phi\in\Phi \mbox{ and } E_\lambda\phi = 0 \hsforall \lambda\in\gamma \quad \Rightarrow \quad \phi=0.
\EE
\end{defn}

Gel'fand~\cite{GV1964a} showed that for any self-adjoint operator the generalised eigenfunctions form a complete system. It is well-known~\cite{Nai1967a} that the generalised eigenfunctions of a Birkhoff-regular operator form a Reisz basis and Locker~\cite{Loc2000a,Loc2008a} provides a constructive proof that the generalised eigenfunctions of a two-point linear differential operator form a complete system, provided the operator is regular or simply irregular (see Locker~\cite{Loc2008a} for definitions of reguarity). However the picture is less clear for degenerate irregular operators such as that associated with problem~2.

It has been established that the generalised eigenfunctions of $S$ associated with problem~2 form a complete system~\cite{Shk1976a} but it is not clear whether this result holds for a general degenerate irregular $S$ associated with a well-posed IBVP. Moreover, completeness of a system is of little use for the expansion of an arbitrary function in that system unless the expansion is guaranteed to converge, and this is known to fail for the operator associated with problem~2~\cite{Jac1915a}.

In contrast, augmented eigenfunctions have the requisite convergence property (and are guaranteed complete) for any $S$ associated with a well-posed IBVP. A very strong property of this convergence is elucidated by the following definitions: two different types of spectral representation of the non-self-adjoint differential operators we study in this paper.

\begin{defn} \label{defn:Spect.Rep.I}
Suppose that $E=\{E_\lambda:\lambda\in\gamma\}$ is a system of type~\textup{I} augmented eigenfunctions of $L$ up to integration over $\gamma$, and that
\BE \label{eqn:Spect.Rep.defnI.conv}
\int_\gamma e^{i\lambda x} E_\lambda L\phi \d\lambda \mbox{\textnormal{ converges }} \hsforall \phi\in\Phi, \hsforall x\in I.
\EE
Furthermore, assume that $E$ is a complete system. Then we say that $E$ provides a \emph{spectral representation} of $L$ in the sense that
\BE \label{eqn:Spect.Rep.I}
\int_\gamma e^{i\lambda x} E_\lambda L\phi \d\lambda = \int_\gamma e^{i\lambda x} \lambda^n E_\lambda \phi\d\lambda \qquad \hsforall \phi\in\Phi, \hsforall x\in I.
\EE
\end{defn}

\begin{defn} \label{defn:Spect.Rep.I.II}
Suppose that $E^{(\mathrm{I})}=\{E_\lambda:\lambda\in\gamma^{(\mathrm{I})}\}$ is a system of type~\textup{I} augmented eigenfunctions of $L$ up to integration over $\gamma^{(\mathrm{I})}$ and that
\BE \label{eqn:Spect.Rep.defnI.II.conv1}
\int_{\gamma^{(\mathrm{I})}} e^{i\lambda x} E_\lambda L\phi \d\lambda \mbox{\textnormal{ converges }} \hsforall \phi\in\Phi, \hsforall x\in I.
\EE
Suppose also that $E^{(\mathrm{II})}=\{E_\lambda:\lambda\in\gamma^{(\mathrm{II})}\}$ is a system of type~\textup{II} augmented eigenfunctions of $L$ up to integration over $\gamma^{(\mathrm{II})}$ and that
\BE \label{eqn:Spect.Rep.defnI.II.conv2}
\int_{\gamma^{(\mathrm{II})}} e^{i\lambda x} E_\lambda \phi \d\lambda \mbox{\textnormal{ converges }} \hsforall \phi\in\Phi, \hsforall x\in I.
\EE
Furthermore, assume that $E=E^{(\mathrm{I})}\cup E^{(\mathrm{II})}$ is a complete system. Then we say that $E$ provides a \emph{spectral representation} of $L$ in the sense that
\begin{subequations} \label{eqn:Spect.Rep.II}
\begin{align} \label{eqn:Spect.Rep.II.1}
\int_{\gamma^{(\mathrm{I})}} e^{i\lambda x} E_\lambda L\phi \d\lambda &= \int_{\gamma^{(\mathrm{I})}} \lambda^n e^{i\lambda x} E_\lambda \phi \d\lambda & \hsforall \phi &\in\Phi, \hsforall x\in I, \\ \label{eqn:Spect.Rep.II.2}
\int_{\gamma^{(\mathrm{II})}} \frac{1}{\lambda^n} e^{i\lambda x} E_\lambda L\phi \d\lambda &= \int_{\gamma^{(\mathrm{II})}} e^{i\lambda x} E_\lambda \phi \d\lambda & \hsforall \phi &\in\Phi, \hsforall x\in I.
\end{align}
\end{subequations}
\end{defn}

Completeness is an essential component of any definition of a spectral representation; see Gel'fand's definition~\cite{GV1964a}. Indeed, otherwise, for some $\phi\in\Phi$, equation~\eqref{eqn:Spect.Rep.I} is trivially $0=0$. Crucially, it is possible to obtain the necessary completeness result by studying the IBVP associated with the operator $L$.

According to definition~\ref{defn:Spect.Rep.I}, the operator $L$ is diagonalised (in the traditional sense) by the complete transform pair
\BE
\left(E_\lambda,\int_\gamma e^{i\lambda x} \cdot \d\lambda \right).
\EE
Hence, augmented eigenfunctions of type~\textup{I} provide a natural extension of the generalised eigenfunctions of Gel'fand~\&~Vilenkin. This form of spectral representation is sufficient to describe the transform pair associated with problem~1. However, the spectral interpretation of the transform pair used to solve problem~2 gives rise to augmented eigenfunctions of type~II, which are clearly quite different from the generalised eigenfunctions of Gel'fand~\&~Vilenkin.

Definition~\ref{defn:Spect.Rep.I.II} describes how an operator may be decomposed into two parts, one of which is diagonalised in the traditional sense, whereas the other possesses a diagonalised inverse.

\begin{thm} \label{thm:Diag:LKdV:1}
The transform pairs $(F_\lambda,f_x)$ defined in equations~\emph{\eqref{eqn:introTrans.1.1a}--\eqref{eqn:introTrans.1.1d}} and~\eqref{eqn:introTrans.1.1a},~\eqref{eqn:introTrans.1.1b},~\eqref{eqn:introTrans.1.1e} and~\eqref{eqn:introTrans.1.1f} provide spectral representations of the spatial differential operators associated with problems~1 and~2 respectively in the sense of definition~\ref{defn:Spect.Rep.I.II}.
\end{thm}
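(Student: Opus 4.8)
The plan is to verify the hypotheses of Definition~\ref{defn:Spect.Rep.I.II} directly. For each of the two problems I would take $\gamma^{(\mathrm{I})} = \Gamma_0$, $\gamma^{(\mathrm{II})} = \Gamma^+\cup\Gamma^-$, $E^{(\mathrm{I})} = \{F_\lambda : \lambda\in\Gamma_0\}$, $E^{(\mathrm{II})} = \{F_\lambda : \lambda\in\Gamma^+\cup\Gamma^-\}$, let $C$ be the ambient linear topological space on which the $F_\lambda$ act, let $\Phi\subset C$ be the subspace of smooth functions on $[0,1]$ obeying the boundary conditions of the problem, and let $L$ be the associated third-order spatial differential operator, for which (as noted above) $e^{i\lambda x}$ is a formal eigenfunction with eigenvalue $z(\lambda) = \lambda^3$; note that $\gamma = \gamma^{(\mathrm{I})}\cup\gamma^{(\mathrm{II})}$ is precisely the contour of the inverse transform~\eqref{eqn:introTrans.1.1b}. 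From Section~\ref{sec:Spectral} I would import the facts that $E^{(\mathrm{I})}$ is a family of type~\textup{I} augmented eigenfunctions and $E^{(\mathrm{II})}$ a family of type~\textup{II} augmented eigenfunctions of $L$, each with eigenvalue $z(\lambda) = \lambda^3$. The relation~\eqref{eqn:defnAugEig.AugEig} itself comes from integrating $F_\lambda(L\phi)$ by parts three times: since each $\phi^\pm(\cdot,\lambda)$ is a linear combination of $e^{-i\lambda x}$, $e^{-i\alpha\lambda x}$, $e^{-i\alpha^2\lambda x}$, which are eigenfunctions of the formal transpose of $L$ all with the same eigenvalue $\lambda^3$ (here $\alpha^3 = 1$), the bulk term reproduces $\lambda^3 F_\lambda(\phi)$ and the boundary contributions, reduced using the boundary conditions that define $\Phi$, are the remainder functional $R_\lambda\in\Phi'$.

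The two control conditions would then be checked as follows. On $\Gamma_0$: the numerators in~\eqref{eqn:introTrans.1.1c}--\eqref{eqn:introTrans.1.1f} vanish at $\lambda = 0$ to at least the order of the zero of $\Delta$ there, so $\phi^\pm(\cdot,\lambda)$ and its $x$-derivatives continue holomorphically across $\lambda = 0$; hence $\lambda\mapsto R_\lambda(\phi)$ is holomorphic inside $\Gamma_0$ and Cauchy's theorem yields $\int_{\Gamma_0}e^{i\lambda x}R_\lambda(\phi)\d\lambda = 0$, which is~\eqref{eqn:defnAugEig.Control1}. On $\Gamma^+\cup\Gamma^-$ only the weaker condition~\eqref{eqn:defnAugEig.Control2} is required; I would deform $\Gamma^\pm$ towards infinity inside $D^\pm = \{\lambda\in\C^\pm : \Im(\lambda^3) > 0,\ |\lambda|>1\}$, where the ratios of the relevant exponentials to $\Delta(\lambda)$ stay bounded and, for $x\in(0,1)$, the factor $e^{i\lambda x}$ — respectively $e^{i\lambda(x-1)}$, which is what effectively occurs once the $e^{-i\lambda}$ prefactor of $\phi^-$ is taken into account — decays, while the extra factor $1/z(\lambda) = \lambda^{-3}$ makes the connecting arcs contribute nothing; tracking the zeros of $\Delta$ shows the deformation leaves no residual term, giving~\eqref{eqn:defnAugEig.Control2}. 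The convergence requirements~\eqref{eqn:Spect.Rep.defnI.II.conv1}--\eqref{eqn:Spect.Rep.defnI.II.conv2} are then immediate: $\int_{\Gamma_0}e^{i\lambda x}E_\lambda L\phi\,\d\lambda$ integrates a continuous function over a compact contour, and $\int_{\Gamma^\pm}e^{i\lambda x}E_\lambda\phi\,\d\lambda$ converges by the same decay estimates on $F_\lambda(\phi)$ along $\Gamma^\pm$ that make the inverse transform~\eqref{eqn:introTrans.1.1b} well-defined in Section~\ref{sec:Transforms.valid}.

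Completeness of $E = E^{(\mathrm{I})}\cup E^{(\mathrm{II})}$ would follow from the validity of the transform pair $(F_\lambda,f_x)$ established in Section~\ref{sec:Transforms.valid}: if $\phi\in\Phi$ and $F_\lambda(\phi) = 0$ for all $\lambda\in\Gamma_0\cup\Gamma^+\cup\Gamma^-$, then $\phi(x) = f_x\bigl(F_\lambda(\phi)\bigr) = f_x(0) = 0$, which is~\eqref{eqn:defnGEInt.completecriterion}. With all hypotheses of Definition~\ref{defn:Spect.Rep.I.II} in hand, the two spectral representation identities drop out of~\eqref{eqn:defnAugEig.AugEig}: multiplying by $e^{i\lambda x}$ and integrating over $\gamma^{(\mathrm{I})} = \Gamma_0$ gives~\eqref{eqn:Spect.Rep.II.1} once the remainder integral is discarded by~\eqref{eqn:defnAugEig.Control1}, and dividing by $z(\lambda) = \lambda^3$, multiplying by $e^{i\lambda x}$ and integrating over $\gamma^{(\mathrm{II})} = \Gamma^+\cup\Gamma^-$ gives~\eqref{eqn:Spect.Rep.II.2} once the remainder integral is discarded by~\eqref{eqn:defnAugEig.Control2}.

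I expect the real obstacle to be the type~\textup{II} control condition~\eqref{eqn:defnAugEig.Control2} on $\Gamma^+\cup\Gamma^-$, which carries the new spectral content: the delicate points are the correct bookkeeping of the (generically infinitely many) zeros of $\Delta$ relative to $\Gamma^\pm$, and the fact that the weight $\lambda^{-3}$ is \emph{exactly} what is needed to close the contour at infinity without leaving an uncontrolled residual contribution — equivalently, that the obstruction which prevents problem~2 from possessing a series representation is precisely cancelled by division by the eigenvalue. In the logical order of the paper this is done in Section~\ref{sec:Spectral} and is only invoked here. A subsidiary point is to pin down the ambient space $C$ and the topology on $\Phi$ precisely enough that each $F_\lambda$ is continuous, each $R_\lambda$ lies in $\Phi'$, and all the contour integrals and manipulations above are legitimate.
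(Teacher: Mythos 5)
Your overall architecture is the same as the paper's: $\gamma^{(\mathrm{I})}=\Gamma_0$, $\gamma^{(\mathrm{II})}=\Gamma^+\cup\Gamma^-$, eigenvalue $\lambda^3$, remainder functionals from three integrations by parts, Cauchy's theorem on the closed contour $\Gamma_0$, completeness from the validity of the transform pair (Proposition~\ref{prop:Transforms.valid:LKdV:2}), and the two convergence conditions handled essentially as the paper does. The genuine gap is in the one step you yourself identify as ``the real obstacle'': the type~II control condition~\eqref{eqn:defnAugEig.Control2} on $\Gamma^+\cup\Gamma^-$. The mechanism you sketch for it --- deform $\Gamma^\pm$ to infinity inside $D^\pm=\{\lambda\in\C^\pm:\Im(\lambda^3)>0,\ |\lambda|>1\}$, using that ``the ratios of the relevant exponentials to $\Delta(\lambda)$ stay bounded'' there and that ``tracking the zeros of $\Delta$ shows the deformation leaves no residual term'' --- rests on a false premise. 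The zeros of $\Delta$ lie precisely inside the sectors enclosed by $\Gamma^\pm$ (that is why those contours and $\Gamma_0$ were drawn to avoid them), so exponential-over-$\Delta$ ratios are not bounded in $D^\pm$, and if $R_\lambda(\phi)$ genuinely retained a $\Delta$-denominator the deformation would sweep past infinitely many poles and leave residue contributions; indeed this is exactly the obstruction that prevents problem~2 from having a series representation, so it cannot be waved away by ``bookkeeping''.

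What actually makes~\eqref{eqn:defnAugEig.Control2} true --- and what the paper establishes by explicit computation, equations~\eqref{eqn:introS1.Rphi} and~\eqref{eqn:introS2.Rphi} --- is that for $\phi$ in the domain of $S^{(\mathrm{I})}$ or $S^{(\mathrm{II})}$ the boundary terms collapse: every contribution carrying $1/\Delta(\lambda)$ cancels once the boundary conditions are imposed (for problem~2 one uses, e.g., $1+\alpha+\alpha^2=0$ and the fact that the bracket in $\phi^-$ evaluated at $x=1$ reproduces $\Delta(\lambda)$), so the remainder is an \emph{entire} function of $\lambda$, a polynomial of degree one in $\lambda$ (carrying the exponential prefactor on the lower contour, which is what produces the decaying factor $e^{i\lambda(x-1)}$ you correctly anticipate). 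Only then is $e^{i\lambda x}R_\lambda(\phi)/\lambda^3$ analytic in the regions enclosed by $\Gamma^\pm$ and decaying, so that Jordan's lemma gives~\eqref{eqn:defnAugEig.Control2}; the same computation makes the type~I condition on $\Gamma_0$ immediate, without needing your separate removable-singularity argument (which is fine, but is what the paper uses only for the convergence of the $\Gamma_0$ integral). Since your proposal defers precisely this computation while offering an incorrect substitute for it, the central claim of the theorem is not established as written; inserting the explicit evaluation of the boundary terms repairs it and brings your argument into line with the paper's proof.
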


\begin{thm} \label{thm:Diag:LKdV:2}
The transform pair $(F_\lambda,f^\Sigma_x)$ given by equations~\eqref{eqn:introTrans.1.1a} and~\eqref{eqn:introTrans.1.2}, with kernels~\eqref{eqn:introTrans.1.1c} and~\eqref{eqn:introTrans.1.1d}, provides a spectral representation of the spatial differential operator associated with problem~1 in the sense of definition~\ref{defn:Spect.Rep.I}.
\end{thm}

\begin{rmk} \label{rmk:inhomogeneous.BC}
Both problems~1 and~2 involve homogeneous boundary conditions. It is straightforward to extend the above analysis for problems with inhomogeneous boundary conditions, see the remark at the end of section~\ref{ssec:Transform.Method:LKdV}.
\end{rmk}

\begin{rmk} \label{rmk:Other.Papers}
The results in this paper are all for two-point differential operators and the corresponding finite interval IBVP. However the unified transform method is equally well understood for half-line problems. In~\cite{PS2014a}, half-line results are presented which are complementary to those described herein. The half-line and finite interval results are compared and contrasted in~\cite{Smi2014a}
\end{rmk}

\section{Validity of transform pairs} \label{sec:Transforms.valid}

In section~\ref{ssec:Transforms.valid:LKdV} we will establish the validity of the transform pairs defined by equations~\eqref{eqn:introTrans.1.1}. In section~\ref{ssec:Transforms.valid:LKdV:General} we derive an analogous transform pair for a general IBVP.

\subsection{Linearized KdV} \label{ssec:Transforms.valid:LKdV}

\begin{prop} \label{prop:Transforms.valid:LKdV:2}
Let $F_\lambda(f)$ and $f_x(F)$ be given by equations~\emph{\eqref{eqn:introTrans.1.1a}--\eqref{eqn:introTrans.1.1d}}.
For all $f\in C^\infty[0,1]$ such that $f(0)=f(1)=0$ and $f'(0)=2f'(1)$ and for all $x\in(0,1)$, we have
\BE \label{eqn:Transforms.valid:LKdV:prop2:prob1}
f_x(F_\lambda(f)) = f(x).
\EE
Let $F_\lambda(f)$ and $f_x(F)$ be given by equations~\eqref{eqn:introTrans.1.1a},~\eqref{eqn:introTrans.1.1b},~\eqref{eqn:introTrans.1.1e} and~\eqref{eqn:introTrans.1.1f}.
For all $f\in C^\infty[0,1]$ such that $f(0)=f(1)=f'(1)=0$ and for all $x\in(0,1)$,
\BE \label{eqn:Transforms.valid:LKdV:prop2:prob2}
f_x(F_\lambda(f)) = f(x).
\EE
\end{prop}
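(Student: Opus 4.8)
The plan is to establish the inversion identity $f_x(F_\lambda(f)) = f(x)$ by a contour-deformation argument of the type familiar from the unified (Fokas) method: substitute the definition of $F_\lambda(f)$ into the integral $f_x(F) = \{\int_{\Gamma_0} + \int_{\Gamma^+} + \int_{\Gamma^-}\} e^{i\lambda x} F(\lambda)\d\lambda$, recognise the result as (a piece of) the global relation, and then deform the contours $\Gamma^\pm$, $\Gamma_0$ to reduce everything to a single residue/Fourier computation that collapses to $f(x)$. Concretely, I would first insert $\phi^\pm(x,\lambda)$ from \eqref{eqn:introTrans.1.1c}--\eqref{eqn:introTrans.1.1f} and use the definition $\hat f(\lambda) = \int_0^1 e^{-i\lambda y}f(y)\d y$ to write $F_\lambda(f)$ as a sum of terms of the form $\hat f(\mu(\lambda))\cdot(\text{exponential})/\Delta(\lambda)$ with $\mu(\lambda)\in\{\lambda,\alpha\lambda,\alpha^2\lambda\}$. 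One then observes that the $\hat f(\lambda)$-term produces, upon the $\lambda$-integral over $\Gamma_0\cup\Gamma^+\cup\Gamma^-$, exactly $f(x)$ by closing the contour and invoking Jordan's lemma / Cauchy's theorem on the relevant sectors (the exponential $e^{i\lambda(x-y)}$ decays in the correct half-planes for $0<x<1$), while every term involving $\hat f(\alpha\lambda)$ or $\hat f(\alpha^2\lambda)$ must be shown to integrate to zero.

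The key steps, in order, are: (i) expand $F_\lambda(f)$ and split $f_x(F_\lambda(f))$ into a ``diagonal'' part (the $\hat f(\lambda)$ contribution) and ``off-diagonal'' parts (the $\hat f(\alpha\lambda)$ and $\hat f(\alpha^2\lambda)$ contributions); (ii) for the diagonal part, deform $\Gamma^+\cup\Gamma_0\cup\Gamma^-$ in each case so that the contour can be completed to a closed curve enclosing no singularities of $1/\Delta(\lambda)$ outside a controlled region — here one uses that the zeros of $\Delta$ lie in specific sectors and that the $\Gamma_\sigma$ small circles in \eqref{eqn:introIBVP.solution.1} capture exactly the residues needed — ending with the elementary Fourier identity $\frac{1}{2\pi}\int_{\R} e^{i\lambda(x-y)}\d\lambda = \delta(x-y)$ (made rigorous through the smoothness of $f$ and integration by parts in $y$, exploiting $f(0)=f(1)=0$ and, for problem 1, $f'(0)=2f'(1)$, resp.\ $f'(1)=0$ for problem 2); (iii) for each off-diagonal term, use the change of variable $\lambda\mapsto\alpha^{-1}\lambda$ or $\lambda\mapsto\alpha^{-2}\lambda$, under which the three rays defining $\Gamma^+\cup\Gamma_0\cup\Gamma^-$ are permuted and the $3$-fold symmetry $\Delta(\alpha\lambda)=\Delta(\lambda)$ is exploited, to show these contributions recombine into integrals over closed contours in regions where the integrand is analytic and decaying, hence vanish by Cauchy's theorem; (iv) check convergence of all the deformed integrals, which is where the hypotheses $f\in C^\infty[0,1]$ and the boundary conditions enter, guaranteeing enough decay in $\lambda$ of $\hat f(\mu(\lambda))$ along the relevant rays.

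The main obstacle I anticipate is step (iii) together with the bookkeeping in step (ii): one must verify very carefully that after the substitutions $\lambda\mapsto\alpha^{\pm1}\lambda$ the domains $\Gamma^\pm$ and $\Gamma_0$ map onto the correct sectors so that the off-diagonal pieces genuinely close up and cancel, and that no spurious residue of $1/\Delta$ is picked up or dropped during the deformation — this requires knowing the precise location of the zeros of $\Delta(\lambda)$ (they accumulate along the rays $\arg\lambda \in \{0,\pm2\pi/3,\pi\}$ with the asymptotics from \cite{Smi2013a}) and that $|\Delta(\lambda)|$ is bounded below away from small neighbourhoods of those zeros, so that the integrand is uniformly controlled on the deformed contours. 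The boundary conditions are exactly what make the boundary terms arising from integration by parts in the Fourier identity cancel against the residual exponential factors $e^{i\lambda}$, $e^{-i\alpha\lambda}$, etc.\ in $\phi^\pm$; isolating that cancellation cleanly is the delicate computational heart of the argument. Once these deformations and estimates are in place, the identity $f_x(F_\lambda(f)) = f(x)$ for $x\in(0,1)$ follows, and the two cases (problem 1 and problem 2) are handled in parallel, differing only in the explicit form of $\Delta$, $\zeta^\pm$ and hence $\phi^\pm$.
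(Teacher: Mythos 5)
Your overall strategy (substitute the transform, deform contours, reduce to Fourier inversion) is the right family of argument, but the mechanism you propose for the crucial step (iii) has a genuine gap. The off-diagonal contributions involving $\hat f(\alpha\lambda)$ and $\hat f(\alpha^2\lambda)$ do \emph{not} vanish because a rotation $\lambda\mapsto\alpha^{-1}\lambda$ or $\alpha^{-2}\lambda$ lets you close them in analyticity regions: under such a substitution the factor $e^{i\lambda x}$ becomes $e^{i\alpha^{-j}\mu x}$, whose decay sectors are rotated along with the contours, so nothing ``recombines into closed contours'' in a region free of zeros of $\Delta$; moreover the zeros of $\Delta$ sit precisely inside the regions bounded by $\Gamma^\pm$, so any closing of an individual piece there would pick up residues rather than give zero. (Also, the symmetry you invoke is slightly off: for these problems $\Delta(\alpha\lambda)=\alpha^{2}\Delta(\lambda)$, not $\Delta(\lambda)$ --- harmless for locating zeros, but a sign you have not pinned the mechanism down.) The actual reason the off-diagonal terms disappear is much simpler and requires no rotation at all: the $\hat f(\alpha\lambda)$-part (and likewise the $\hat f(\alpha^2\lambda)$-part) of the integrand is \emph{literally the same meromorphic function} on $\Gamma^+\cup\Gamma_0$ and on $\Gamma^-$, so once all contours are deformed (through the pole-free exterior sectors, where the full ratios $\zeta^\pm/\Delta$ decay) onto a single contour $\gamma$ along the real axis, these contributions cancel pointwise. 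Equivalently, and this is the paper's one-line key, there is the exact algebraic identity $\zeta^+(\lambda)-e^{-i\lambda}\zeta^-(\lambda)=\hat f(\lambda)\Delta(\lambda)$ (equation~\eqref{eqn:Transforms.valid:LKdV:prop2:proof.3}), valid for every $f$, which collapses the combined integrand on $\gamma$ to the entire function $e^{i\lambda x}\hat f(\lambda)$; the contour then slides onto $\R$ and classical Fourier inversion finishes the proof. Your proposal never identifies this identity, and without it (or the ``same function on both families'' observation) the off-diagonal cancellation is exactly the step you cannot complete.

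Two further corrections to your outline. First, in step (ii) no residues of $1/\Delta$ are ever ``captured'': the deformation in this proposition passes only through regions where the integrand is analytic, and the small circles $\Gamma_\sigma$ of~\eqref{eqn:introIBVP.solution.1} belong to the alternative series representation for problem~1, not to this inversion argument; if your diagonal piece needed residues, it could not reduce to the plain Fourier identity. Second, splitting the integrand into the three $\hat f(\alpha^j\lambda)$ pieces \emph{before} deforming is delicate because the decay of $\zeta^\pm/\Delta$ in the relevant sectors comes from the fact that the dominant exponentials of the numerator also appear in $\Delta$; individual pieces (and sub-pieces such as $e^{i\lambda x}\hat f(\lambda)$ alone) can grow exponentially along parts of $\Gamma^\pm$ where the combination decays, so each separate deformation and each separate convergence claim would need its own estimate. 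The boundary conditions on $f$ enter only through integration by parts in $\hat f$, improving the decay of $\zeta^\pm/\Delta$ to $O(\lambda^{-2})$ so that Jordan's lemma applies; they are not needed to make the identity $\zeta^+-e^{-i\lambda}\zeta^-=\hat f\Delta$ hold.
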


\begin{proof}
The definition of the transform pair~\eqref{eqn:introTrans.1.1a}--\eqref{eqn:introTrans.1.1d} implies
\BE \label{eqn:Transforms.valid:LKdV:prop2:proof.1}
f_x(F_\lambda(f)) = \frac{1}{2\pi}\left\{\int_{\Gamma^+} + \int_{\Gamma_0}\right\} e^{i\lambda x} \frac{\zeta^+(\lambda)}{\Delta(\lambda)}\d\lambda + \frac{1}{2\pi}\int_{\Gamma^-} e^{i\lambda(x-1)} \frac{\zeta^-(\lambda)}{\Delta(\lambda)}\d\lambda,
\EE
where $\zeta^\pm$ and $\Delta$ are given by equations~\eqref{eqn:introIBVP.DeltaZeta.1} and the contours $\Gamma^+$, $\Gamma^-$ and $\Gamma_0$ are shown in figure~\ref{fig:3o-cont}.

\begin{figure}
\begin{center}
\includegraphics{LKdV-contours-03}
\caption{Contour deformation for the linearized KdV equation.}
\label{fig:3o-contdef}
\end{center}
\end{figure}

The fastest-growing exponentials in the sectors exterior to $\Gamma^\pm$ are indicated on figure~\ref{fig:3o-contdef}a. Each of these exponentials occurs in $\Delta$ and integration by parts shows that the fastest-growing-terms in $\zeta^\pm$ are the exponentials shown on figure~\ref{fig:3o-contdef}a multiplied by $\lambda^{-2}$. Hence the ratio $\zeta^+(\lambda)/\Delta(\lambda)$ decays for large $\lambda$ within the sector $\pi/3\leq\arg\lambda\leq2\pi/3$ and the ratio $\zeta^-(\lambda)/\Delta(\lambda)$ decays for large $\lambda$ within the sectors $-\pi\leq\arg\lambda\leq-2\pi/3$, $-\pi/3\leq\arg\lambda\leq0$. The relevant integrands are meromorphic functions with poles only at the zeros of $\Delta$. The distribution theory of zeros of exponential polynomials~\cite{Lan1931a} implies that the only poles occur within the sets bounded by $\Gamma^\pm$.

The above observations and Jordan's lemma allow us to deform the relevant contours to the contour $\gamma$ shown on figure~\ref{fig:3o-contdef}b; the red arrows on figure~\ref{fig:3o-contdef}a indicate the deformation direction. Hence equation~\eqref{eqn:Transforms.valid:LKdV:prop2:proof.1} simplifies to
\BE \label{eqn:Transforms.valid:LKdV:prop2:proof.2}
f_x(F_\lambda(f)) = \frac{1}{2\pi}\int_\gamma \frac{e^{i\lambda x}}{\Delta(\lambda)}\left( \zeta^+(\lambda) - e^{-i\lambda}\zeta^-(\lambda) \right) \d\lambda.
\EE
Equations~\eqref{eqn:introIBVP.DeltaZeta.1} imply,
\BE \label{eqn:Transforms.valid:LKdV:prop2:proof.3}
\left( \zeta^+(\lambda) - e^{-i\lambda}\zeta^-(\lambda) \right) = \hat{f}(\lambda)\Delta(\lambda),
\EE
where $\hat{f}$ is the Fourier transform of a piecewise smooth function supported on $[0,1]$. Hence the integrand on the right hand side of equation~\eqref{eqn:Transforms.valid:LKdV:prop2:proof.2} is an entire function, so we can deform the contour $\gamma$ onto the real axis. The usual Fourier inversion theorem completes the proof.

The proof for the transform pair~\eqref{eqn:introTrans.1.1a},~\eqref{eqn:introTrans.1.1b},~\eqref{eqn:introTrans.1.1e} and~\eqref{eqn:introTrans.1.1f} is similar.
\end{proof}

In this particular example, it holds that
\BE
f_0(F_\lambda(\phi)) = f(0), \qquad f_1(F_\lambda(\phi)) = f(1)
\EE
but, for general boundary conditions, this will not always be true. However, the vaues at the endpoints can always be recovered by taking appropriate limits from the interior of the interval.

\subsection{General} \label{ssec:Transforms.valid:LKdV:General}

\subsubsection*{Spatial differential operator}

Let $C=C^\infty[0,1]$ and $B_j:C\to\mathbb{C}$ be the following linearly independent boundary forms
\BE
B_j\phi = \sum_{k=0}^{n-1} \left( \M{b}{j}{k}\phi^{(k)}(0) + \M{\beta}{j}{k}\phi^{(k)}(1) \right), \quad j\in\{1,2,\ldots,n\},
\EE
with boundary coefficients $\M{b}{j}{k}$, $\M{\beta}{j}{k} \in \R$. Let
\BE
\Phi=\{\phi\in C:B_j\phi=0\hsforall j\in\{1,2,\ldots,n\}\}
\EE
and $\{B_j^\star:j\in\{1,2,\ldots,n\}\}$ be a set of adjoint boundary forms with adjoint boundary coefficients $\Msup{b}{j}{k}{\star}$, $\Msup{\beta}{j}{k}{\star} \in \R$. Let $S:\Phi\to C$ be the differential operator defined by
\BE \label{eqn:defn.S}
S\phi(x)=(-i)^n\frac{\d^n\phi}{\d x^n}(x).
\EE
Then $S$ is formally self-adjoint but, in general, does not admit a self-adjoint extension because, in general, $B_j\neq B_j^\star$. Indeed, adopting the notation
\BE
[\phi\psi](x) = (-i)^n\sum_{j=0}^{n-1}(-1)^j(\phi^{(n-1-j)}(x)\overline{\psi}^{(j)}(x)),
\EE
of~\cite[Section~11.1]{CL1955a} and using integration by parts, we find
\BE \label{eqn:S.not.s-a}
((-i\d/\d x)^n\phi,\psi) = [\phi\psi](1) - [\phi\psi](0) + (\phi,(-i\d/\d x)^n\psi), \quad \hsforall \phi,\psi\in C^\infty[0,1].
\EE
If $\phi\in\Phi$, then $\psi$ must satisfy the adjoint boundary conditions in order for $[\phi\psi](1) - [\phi\psi](0) = 0$ to be valid.

\subsubsection*{Initial-boundary value problem}

Associated with $S$ and constant $a\in\C$, we define the following homogeneous IBVP: 
\begin{subequations} \label{eqn:IBVP}
\begin{align} \label{eqn:IBVP.PDE}
(\partial_t + aS)q(x,t) &= 0 & \hsforall (x,t) &\in (0,1)\times(0,T), \\ \label{eqn:IBVP.IC}
q(x,0) &= f(x) & \hsforall x &\in [0,1], \\ \label{eqn:IBVP.BC}
q(\cdot,t) &\in \Phi & \hsforall t &\in [0,T],
\end{align}
\end{subequations}
where $f\in\Phi$ is arbitrary.
Such a problem is ill-posed if (but not only if) the exponential time dependence is unbounded for $\lambda\in\R$, which poses restrictions on $a$. This is equivalent~\cite{FP2001a,Smi2012a} to requiring: if $n$ is odd then $a=\pm i$ and if $n$ is even then $\Re(a)\geq0$.

A full characterisation of well-posedness for all problems~\eqref{eqn:IBVP} is given in~\cite{Pel2004a,Smi2012a,Smi2013a}; for even-order problems, well-posedess depends upon the boundary conditions only, but for odd-order it is often the case that a problem is well-posed for $a=i$ and ill-posed for $a=-i$ or vice versa. Both problems~\eqref{eqn:introIBVP.1} and~\eqref{eqn:introIBVP.2} are well-posed. Note that by well-posed, we mean that there exists a unique solution; we make no claims regarding the continuous dependence of the solution on the data.

In the following definition, we make use of the notion of well- and ill-conditioning. This is unrelated to the concept of the same name in numerical analysis and is formally defined in~\cite{PS2013a}. We say that a problem is well-conditioned if certain ratios decay as their arguments approach $\infty$ from within certain sectors of the complex plane. The ratios are those appearing in the integrands of equation~\eqref{eqn:introIBVP.solution.2}. It is easy to see from that equation that their decay is a necessary condition for well-posedness of the IBVP but, as identified in~\cite{Smi2012a}, conditioning also plays a role in the existence of a series representation for the solution of a well-posed problem.

\begin{defn} \label{defn:Types.of.Problem}
We classify the IBVP~\eqref{eqn:IBVP} into three classes using the definitions of~\cite{PS2013a}:
\begin{description}
  \item[\textnormal{type~I:}]{if the problem for $(S,a)$ is well-posed and the problem for $(S,\bar{a})$ is well-conditioned.}
  \item[\textnormal{type~II:}]{if the problem for $(S,a)$ is well-posed but the problem for $(S,\bar{a})$ is ill-conditioned.}
  \item[\textnormal{ill-posed}]{otherwise.}
\end{description}
We will refer to the operators $S$ associated with problems of type~\textnormal{I} and type~\textnormal{II} (for some $a\in\C$) as operators of \emph{type~I} and \emph{type~II} respectively.
\end{defn}
The spectral theory of type~I operators is well understood in terms of an infinite series representation. Here, we provide an alternative spectral representation of the type~I operators and also provide a suitable spectral representation of the type~II operators.

\subsubsection*{Transform pair}

Let $\alpha = e^{2\pi i/n}$. We define the entries of the matrices $M^\pm(\lambda)$ entrywise by
\begin{subequations} \label{eqn:M.defn}
\begin{align} \label{eqn:M+.defn}
\Msup{M}{k}{j}{+}(\lambda) &= \sum_{r=0}^{n-1} (-i\alpha^{k-1}\lambda)^r \Msup{b}{j}{r}{\star}, \\ \label{eqn:M-.defn}
\Msup{M}{k}{j}{-}(\lambda) &= \sum_{r=0}^{n-1} (-i\alpha^{k-1}\lambda)^r \Msup{\beta}{j}{r}{\star}.
\end{align}
\end{subequations}
Then the matrix $M(\lambda)$, defined by
\BE
\M{M}{k}{j}(\lambda) = \Msup{M}{k}{j}{+}(\lambda) + \Msup{M}{k}{j}{-}(\lambda)e^{-i\alpha^{k-1}\lambda},
\EE
is a realization of Birkhoff's characteristic matrix~\cite{Bir1908b} for the operator adjoint to $S$.

We define $\Delta(\lambda) = \det M(\lambda)$. From the theory of exponential polynomials~\cite{Lan1931a}, we know that the only zeros of $\Delta$ are of finite order and are isolated with positive infimal separation $5\epsilon$, say. We define $\Msups{X}{}{}{l}{j}$ as the $(n-1)\times(n-1)$ submatrix of $M$ with $(1,1)$ entry the $(l+1,j+1)$ entry of $M$.

The transform pair is given by
\begin{subequations} \label{eqn:defn.forward.transform}
\begin{align}
f(x) &\mapsto F(\lambda): & F_\lambda(f) &= \begin{cases} F_\lambda^+(f) & \mbox{if } \lambda\in\Gamma_0^+\cup\Gamma_a^+, \\ F_\lambda^-(f) & \mbox{if } \lambda\in\Gamma_0^-\cup\Gamma_a^-, \end{cases} \\ \label{eqn:defn.inverse.transform.2}
F(\lambda) &\mapsto f(x): & f_x(F) &= \int_{\Gamma} e^{i\lambda x} F(\lambda) \d\lambda, \qquad x\in[0,1],
\end{align}
\end{subequations}
where, for $\lambda\in\C$ such that $\Delta(\lambda)\neq0$,
\begin{subequations} \label{eqn:defn.Fpm.rho}
\begin{align}
F^+_\lambda(f) &= \frac{1}{2\pi\Delta(\lambda)} \sum_{l=1}^n\sum_{j=1}^n (-1)^{(n-1)(l+j)} \det \Msups{X}{}{}{l}{j}(\lambda) \Msup{M}{1}{j}{+}(\lambda) \int_0^1 e^{-i\alpha^{l-1}\lambda x} f(x)\d x, \\
F^-_\lambda(f) &= \frac{-e^{-i\lambda}}{2\pi\Delta(\lambda)} \sum_{l=1}^n\sum_{j=1}^n (-1)^{(n-1)(l+j)} \det \Msups{X}{}{}{l}{j}(\lambda) \Msup{M}{1}{j}{-}(\lambda) \int_0^1 e^{-i\alpha^{l-1}\lambda x} f(x)\d x,
\end{align}
\end{subequations}
and the various contours are defined by
\begin{subequations}
\begin{align}
\Gamma       &= \Gamma_0 \cup \Gamma_a, \\
\Gamma_0     &= \Gamma_0^+ \cup \Gamma_0^-, \\
\Gamma^+_0   &= \bigcup_{\substack{\sigma\in\overline{\C^+}:\\\Delta(\sigma)=0}}C(\sigma,\epsilon), \\
\Gamma^-_0   &= \bigcup_{\substack{\sigma\in\C^-:\\\Delta(\sigma)=0}}C(\sigma,\epsilon), \\
\Gamma_a     &= \Gamma^+_a \cup \Gamma^-_a, \\ \notag
\Gamma^\pm_a\; &\mbox{is the boundary of the domain} \\ &\hspace{5ex}\left\{\lambda\in\C^\pm:\Re(a\lambda^n)>0\right\} \setminus \bigcup_{\substack{\sigma\in\C:\\\Delta(\sigma)=0}}D(\sigma,2\epsilon).
\end{align}
\end{subequations}

\begin{figure}
\begin{center}
\includegraphics{General-contours-02}
\caption{Definition of the contour $\Gamma$.}
\label{fig:general-contdef}
\end{center}
\end{figure}

Figure~\ref{fig:general-contdef} shows the position of the contours for some hypothetical $\Delta$ with zeros at the black dots. The contour $\Gamma_0^+$ is shown solid in blue and the contour $\Gamma_0^-$ is shown dashed in black. The contour $\Gamma_a^+$ is shown dot-dashed in red and $\Gamma_a^-$ is shown dot-dot-dashed in green. The regions to the left of each contour are shaded. This case corresponds to $a=-i$. The figure indicates the possibility that there may be infinitely many zeros lying in the interior of the sectors bounded by $\Gamma_a$. For such a zero, $\Gamma_a$ has a circular component enclosing this zero with radius $2\epsilon$.

The validity of the transform pairs is expressed in the following proposition:

\begin{prop} \label{prop:Transforms.valid:General:2}
Let $S$ be a type~\textnormal{I} or type~\textnormal{II} operator. Then for all $f\in\Phi$ and for all $x\in(0,1)$,
\BE \label{eqn:Transforms.valid:General:prop2}
f_x(F_\lambda(f)) = \left\{\int_{\Gamma^+_0}+\int_{\Gamma^+_a}\right\}e^{i\lambda x} F^+_\lambda(f)\d\lambda + \left\{\int_{\Gamma^-_0}+\int_{\Gamma^-_a}\right\}e^{i\lambda x} F^-_\lambda(f)\d\lambda = f(x).
\EE
\end{prop}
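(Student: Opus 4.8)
The plan is to follow the three-step structure of the proof of Proposition~\ref{prop:Transforms.valid:LKdV:2}: substitute the definitions and collapse the four contour integrals in~\eqref{eqn:Transforms.valid:General:prop2} onto a single contour $\gamma$ close to $\R$; use a determinantal (Cramer/Laplace) identity to reduce the combined integrand to $e^{i\lambda x}\hat{f}(\lambda)$, which is entire; then deform $\gamma$ onto $\R$ and invoke the classical Fourier inversion theorem, which reconstructs $f$ at the interior points $x\in(0,1)$.

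First I would insert~\eqref{eqn:defn.Fpm.rho} into the left-hand side of~\eqref{eqn:Transforms.valid:General:prop2}. Setting $\zeta^{+}(\lambda)=\sum_{l,j}\det\Msups{X}{}{}{l}{j}(\lambda)\,\Msup{M}{1}{j}{+}(\lambda)\,\hat{f}(\alpha^{l-1}\lambda)$ and $\zeta^{-}(\lambda)=-\sum_{l,j}\det\Msups{X}{}{}{l}{j}(\lambda)\,\Msup{M}{1}{j}{-}(\lambda)\,\hat{f}(\alpha^{l-1}\lambda)$, where $\hat f$ is the Fourier transform of $f$ extended by zero, the four integrands become $e^{i\lambda x}\zeta^{+}(\lambda)/(2\pi\Delta(\lambda))$ on $\Gamma_0^{+}\cup\Gamma_a^{+}\subset\overline{\C^{+}}$ and $e^{i\lambda(x-1)}\zeta^{-}(\lambda)/(2\pi\Delta(\lambda))$ on $\Gamma_0^{-}\cup\Gamma_a^{-}\subset\overline{\C^{-}}$. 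Each is meromorphic with poles only at the zeros of $\Delta$, by the theory of exponential polynomials~\cite{Lan1931a}, and those zeros have positive infimal separation $5\epsilon$; this is what keeps the family of $\epsilon$- and $2\epsilon$-circles used below mutually disjoint and the estimates uniform.

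The analytic core is the deformation. As in the linearized KdV proof I would expand $\Delta=\det M$, each minor $\det\Msups{X}{}{}{l}{j}$, and the polynomials $\Msup{M}{k}{j}{\pm}$ as exponential polynomials whose exponents are partial sums of the $-i\alpha^{k-1}\lambda$, and integrate $\int_0^1 e^{-i\alpha^{l-1}\lambda x}f(x)\d x$ by parts using $f\in\Phi$, so that the boundary forms $B_j f$ drop out. This shows that the fastest-growing exponentials appearing in $\zeta^{\pm}$ are among those of $\Delta$ but are attenuated by a factor $O(\lambda^{-1})$ — this is the Birkhoff-type regularity that is guaranteed precisely when $S$ is a type~\textnormal{I} or type~\textnormal{II} operator, and is where the hypothesis is used. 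Consequently $\zeta^{+}/\Delta$ is holomorphic and tends to $0$ as $\lambda\to\infty$ through the sectors of $\overline{\C^{+}}$ in which $\Re(a\lambda^{n})\le 0$, uniformly away from the zeros of $\Delta$, and likewise $e^{-i\lambda}\zeta^{-}/\Delta$ through the sectors of $\overline{\C^{-}}$ in which $\Re(a\lambda^{n})\le 0$. Since $|e^{i\lambda x}|\le 1$ on $\C^{+}$ and $|e^{i\lambda(x-1)}|\le 1$ on $\C^{-}$ for $x\in(0,1)$, Jordan's lemma lets me push the non-compact straight parts of $\Gamma_a^{\pm}$ into these sectors and collapse all four integrals onto a single contour $\gamma$ lying between them and $\R$, exactly as $\Gamma^{\pm}$ is deformed to $\gamma$ in Figure~\ref{fig:3o-contdef}: the radius-$\epsilon$ circles of $\Gamma_0^{\pm}$ around zeros interior to the sectors bounded by $\Gamma_a^{\pm}$ cancel against the radius-$2\epsilon$ indentations of $\Gamma_a^{\pm}$, and the remaining circles are absorbed into $\gamma$. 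This yields $f_x(F_\lambda(f))=\frac{1}{2\pi}\int_\gamma \frac{e^{i\lambda x}}{\Delta(\lambda)}\bigl(\zeta^{+}(\lambda)-e^{-i\lambda}\zeta^{-}(\lambda)\bigr)\d\lambda$, the sign arising from the orientation of the $\C^{-}$ contours, in direct analogy with~\eqref{eqn:Transforms.valid:LKdV:prop2:proof.2}. I expect this bookkeeping to be the main obstacle — in particular, justifying the deformation when infinitely many zeros of $\Delta$ lie inside the sectors bounded by $\Gamma_a^{\pm}$, as happens for type~\textnormal{II} operators (Figure~\ref{fig:general-contdef}).

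Finally, since $\M{M}{1}{j}(\lambda)=\Msup{M}{1}{j}{+}(\lambda)+\Msup{M}{1}{j}{-}(\lambda)e^{-i\lambda}$ (as $\alpha^{0}=1$), one has $\zeta^{+}(\lambda)-e^{-i\lambda}\zeta^{-}(\lambda)=\sum_{l,j}\det\Msups{X}{}{}{l}{j}(\lambda)\,\M{M}{1}{j}(\lambda)\,\hat{f}(\alpha^{l-1}\lambda)$, and the Laplace expansion of $\det M$ — equivalently Cramer's rule, $\det\Msups{X}{}{}{l}{j}$ being, up to sign, the $(l,j)$ cofactor of $M$ — gives $\sum_{j}\det\Msups{X}{}{}{l}{j}(\lambda)\,\M{M}{1}{j}(\lambda)=\Delta(\lambda)\,\delta_{l1}$, whence $\zeta^{+}(\lambda)-e^{-i\lambda}\zeta^{-}(\lambda)=\Delta(\lambda)\hat{f}(\lambda)$, the general counterpart of~\eqref{eqn:Transforms.valid:LKdV:prop2:proof.3}. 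The integrand therefore collapses to the entire function $e^{i\lambda x}\hat{f}(\lambda)$; as $\gamma$ stays within a bounded distance of $\R$ it may be deformed onto $\R$ without obstruction, and the usual Fourier inversion theorem gives $\frac{1}{2\pi}\int_{\R}e^{i\lambda x}\hat{f}(\lambda)\d\lambda=f(x)$ for $x\in(0,1)$, completing the proof.
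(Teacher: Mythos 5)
Your proposal is correct and follows essentially the same route as the paper's proof: the cofactor identity you derive, $\zeta^+(\lambda)-e^{-i\lambda}\zeta^-(\lambda)=\Delta(\lambda)\hat f(\lambda)$, is exactly the paper's statement $F^+_\lambda(f)-F^-_\lambda(f)=\tfrac{1}{2\pi}\hat f(\lambda)$ (equation~\eqref{eqn:F+-F-.is.phihat}) made explicit, and the contour collapse via the decay of $F^\pm_\lambda(f)$ in the sectors exterior to $\Gamma^\pm_a$ (which the paper attributes to well-posedness, citing~\cite{Smi2012a}, rather than deriving by your integration-by-parts sketch), the cancellation of the $\epsilon$- and $2\epsilon$-circles, and the final deformation of $\gamma$ onto $\R$ followed by Fourier inversion all match the paper's argument.
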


\begin{proof}
A simple calculation yields
\BE \label{eqn:F+-F-.is.phihat}
\hsforall f\in C,\hsforall S, \qquad F^+_\lambda(f)-F^-_\lambda(f) = \frac{1}{2\pi}\hat{f}(\lambda).
\EE

As shown in~\cite{Smi2012a}, the well-posedness of the IBVP implies $F^\pm_\lambda(f)=O(\lambda^{-1})$ as $\lambda\to\infty$ within the sectors exterior to $\Gamma^\pm_a$. The only singularities of $F^\pm_\lambda(f)$ are isolated poles hence, by Jordan's lemma and a contour deformation similar to the one shown in figure~\ref{fig:3o-contdef},
\begin{multline} \label{eqn:Transforms.valid:General:2:proof.1}
\left\{\int_{\Gamma^+_0}+\int_{\Gamma^+_a}\right\}e^{i\lambda x} F^+_\lambda(f)\d\lambda + \left\{\int_{\Gamma^-_0}+\int_{\Gamma^-_a}\right\}e^{i\lambda x} F^-_\lambda(f)\d\lambda \\
= \sum_{\substack{\sigma\in\C:\\\Im(\sigma)>\epsilon,\\\Delta(\sigma)=0}}\left\{\int_{C(\sigma,\epsilon)}-\int_{C(\sigma,2\epsilon)}\right\}e^{i\lambda x} F^+_\lambda(f)\d\lambda \hspace{5em} \\ \hspace{5em}+ \sum_{\substack{\sigma\in\C:\\\Im(\sigma)<\epsilon,\\\Delta(\sigma)=0}}\left\{\int_{C(\sigma,\epsilon)}-\int_{C(\sigma,2\epsilon)}\right\}e^{i\lambda x} F^-_\lambda(f)\d\lambda \\
+ \int_\gamma e^{i\lambda x}\left( F^+_\lambda(f) - F^-_\lambda(f) \right) \d\lambda,
\end{multline}
where $\gamma$ is a contour running along the real line in the increasing direction but perturbed along circular arcs in such a way that it is always at least $\epsilon$ away from each pole of $\Delta$. The series on the right hand side of equation~\eqref{eqn:Transforms.valid:General:2:proof.1} yield a zero contribution. As $f\in\Phi$, its Fourier transform $\hat{f}$ is an entire function hence, by statement~\eqref{eqn:F+-F-.is.phihat}, the integrand in the final term on the right hand side of equation~\eqref{eqn:Transforms.valid:General:2:proof.1} is an entire function and we may deform $\gamma$ onto the real line. The validity of the usual Fourier transform completes the proof.
\end{proof}

\section{True integral transform method for IBVP} \label{sec:Transform.Method}

In section~\ref{ssec:Transform.Method:LKdV} we will prove equation~\eqref{eqn:introIBVP.solution.transform.1} for the transform pairs~\eqref{eqn:introTrans.1.1}. In section~\ref{ssec:Transform.Method:LKdV:General}, we establish equivalent results for general type~I and type~II IBVP.

\subsection{Linearized KdV} \label{ssec:Transform.Method:LKdV}

\begin{prop} \label{prop:Transform.Method:LKdV:2}
The solution of problem~1 is given by equation~\eqref{eqn:introIBVP.solution.transform.1}, with $F_\lambda(f)$ and $f_x(F)$ defined by equations~\emph{\eqref{eqn:introTrans.1.1a}--\eqref{eqn:introTrans.1.1d}}.

The solution of problem~2 is given by equation~\eqref{eqn:introIBVP.solution.transform.1}, with $F_\lambda(f)$ and $f_x(F)$ defined by equations~\eqref{eqn:introTrans.1.1a},~\eqref{eqn:introTrans.1.1b},~\eqref{eqn:introTrans.1.1e} and~\eqref{eqn:introTrans.1.1f}.
\end{prop}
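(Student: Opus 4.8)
The plan is to show that $q(x,t) := f_x(e^{i\lambda^3 t} F_\lambda(f))$ solves the relevant IBVP, by verifying the PDE, the initial condition, and the boundary conditions in turn. For the initial condition, setting $t=0$ reduces the claim to $f_x(F_\lambda(f)) = f(x)$ for $x \in (0,1)$, which is exactly Proposition~\ref{prop:Transforms.valid:LKdV:2}; the endpoint values are recovered by continuity as remarked after that proposition's proof. For the PDE, the key point is that differentiation under the integral sign is justified: the factor $e^{i\lambda^3 t}$ together with the decay of $\zeta^\pm(\lambda)/\Delta(\lambda)$ and the fact that the contours $\Gamma^\pm$ lie in the regions where $\Re(i\lambda^3) \leq 0$ means that $\partial_t$ and $\partial_x^3$ can each be brought inside, producing a factor $i\lambda^3$ and $(i\lambda)^3 = -i\lambda^3$ respectively, which cancel. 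So $q_t + q_{xxx} = 0$ follows formally and the analytic justification is a standard dominated-convergence/Morera argument using the exponential decay along the contours.

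The substantive step is the boundary conditions. First I would establish that $q(x,t)$, so defined, extends continuously (indeed smoothly) up to $x \in \{0,1\}$ once $t > 0$: although $f_0(F_\lambda(f)) \neq f(0)$ in general, the smoothing from the exponential decay of the integrand for $t>0$ (now uniform in $x \in [0,1]$) removes this obstruction, so $q(\cdot,t) \in C^\infty[0,1]$ for each $t>0$. Then I would evaluate $q(0,t)$, $q(1,t)$, and $q_x(1,t) - q_x(0,t)/2$ (for problem~1) or $q(0,t), q(1,t), q_x(1,t)$ (for problem~2) directly from the contour-integral representation~\eqref{eqn:Transforms.valid:LKdV:prop2:proof.1} with the extra factor $e^{i\lambda^3 t}$. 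For each boundary functional one substitutes $x=0$ or $x=1$ into $\phi^\pm(x,\lambda)$ (equivalently into $\zeta^\pm(\lambda)/\Delta(\lambda)$ via the representation~\eqref{eqn:introIBVP.solution.2}) and checks that the resulting integrand, after combining the $\Gamma^+$, $\Gamma_0$ and $\Gamma^-$ pieces, either vanishes identically as a function of $\lambda$ or deforms to a contour on which the integral is zero. This is where the specific algebraic structure of $\zeta^\pm$ and $\Delta$ in~\eqref{eqn:introIBVP.DeltaZeta.1} and~\eqref{eqn:introIBVP.DeltaZeta.2} is used: the definitions were engineered precisely so that the relevant linear combinations of boundary traces produce entire integrands (the poles at zeros of $\Delta$ cancel) which then vanish by Jordan's lemma and analyticity, exactly as in the proof of Proposition~\ref{prop:Transforms.valid:LKdV:2}.

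The main obstacle I anticipate is the bookkeeping in the boundary-condition verification: one must handle the three contours $\Gamma^+, \Gamma_0, \Gamma^-$ and the shift $x \mapsto x-1$ in the $\Gamma^-$ term carefully, keeping track of which exponentials $e^{-i\alpha^j \lambda}$ appear where, and confirm that at $x=0$ and $x=1$ the boundary combinations collapse the $\zeta^\pm/\Delta$ ratios to expressions without poles. A secondary technical point is justifying all the contour deformations and the interchange of $\sum/\int$ with $\partial_t, \partial_x$ uniformly for $t$ in a neighbourhood of any $t_0 > 0$; this rests on the same decay estimates ($F^\pm_\lambda(f) = O(\lambda^{-1})$ and $\Re(i\lambda^3) \leq 0$ on $\Gamma^\pm$) already invoked in Section~\ref{sec:Transforms.valid}, so it is routine but must be stated. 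Finally, uniqueness of the solution — needed to conclude that the function we constructed \emph{is} ``the solution'' — follows from the well-posedness results of~\cite{FP2001a,Pel2005a,Smi2012a,Smi2013a} cited in the introduction, so it suffices to exhibit our $q$ as \emph{a} solution. For problem~2 the computation is the stated analogue with~\eqref{eqn:introIBVP.DeltaZeta.2} in place of~\eqref{eqn:introIBVP.DeltaZeta.1} and the three boundary conditions $q(0,t)=q(1,t)=q_x(1,t)=0$; the argument is structurally identical.
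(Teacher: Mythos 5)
Your strategy is genuinely different from the paper's. You propose to take the candidate formula $q(x,t)=f_x\bigl(e^{i\lambda^3t}F_\lambda(f)\bigr)$ and verify directly that it satisfies the PDE, the initial condition and the boundary conditions, then invoke uniqueness from well-posedness. The paper argues in the opposite direction: it \emph{assumes} the smooth solution $q$ exists (existence being supplied by the cited well-posedness results), applies the forward transform to $q(\cdot,t)$, uses the PDE and integration by parts to obtain $F_\lambda(q(\cdot,t))=e^{i\lambda^3t}F_\lambda(f)$ plus explicit boundary terms $Q_j$, and then applies Proposition~\ref{prop:Transforms.valid:LKdV:2} to $q(\cdot,t)$ itself (which lies in the domain because $q$ satisfies the boundary conditions), killing the extra terms via the homogeneous boundary conditions, entirety, and Jordan's lemma, as in~\eqref{eqn:Transform.Method:LKdV.2:q.big}. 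The paper's route thereby sidesteps the two hardest parts of your plan: it never has to differentiate the contour integral in $x$ or $t$, and it never has to verify the boundary conditions of the integral representation directly. (Your need for uniqueness is mirrored by the paper's need for existence, so that exchange is fair.)

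However, as written your plan contains a genuine gap in the PDE step. On the unbounded ray portions of $\Gamma^\pm$ one has $\Im(\lambda^3)=0$, so $|e^{i\lambda^3t}|=1$ and there is no exponential decay from the time factor; the only decay available there is the algebraic decay of $\zeta^\pm(\lambda)/\Delta(\lambda)$ (at best $O(\lambda^{-2})$ for data satisfying the boundary conditions). Bringing $\partial_x^3$ or $\partial_t$ inside multiplies the integrand by $\lambda^3$, producing an integrand of size $O(|\lambda|)$ on those rays, so the differentiated integral is not absolutely convergent and the ``standard dominated-convergence/Morera argument'' fails on the contours as given. To repair this you must first deform the rays of $\Gamma^\pm$ into the open domains $\{\Im(\lambda^3)>0\}$, where $e^{i\lambda^3t}$ decays exponentially for $t>0$; this deformation has to thread past the zeros of $\Delta$, which lie precisely inside those domains at bounded distance from the rays, so it needs an explicit argument (using the asymptotic location of the zeros) rather than a remark. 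Relatedly, the boundary-condition verification, which you correctly identify as the substantive step, is deferred entirely: showing that $q(0,t)$, $q(1,t)$ and the derivative conditions vanish requires the $\lambda\mapsto\alpha\lambda,\alpha^2\lambda$ symmetry of $e^{i\lambda^3t}$ and global-relation-type identities among $\hat f(\lambda)$, $\hat f(\alpha\lambda)$, $\hat f(\alpha^2\lambda)$, not merely the observation that poles cancel and Jordan's lemma applies; until that computation is carried out (for both~\eqref{eqn:introIBVP.DeltaZeta.1} and~\eqref{eqn:introIBVP.DeltaZeta.2}), the proof is incomplete. If you want to avoid both difficulties, the paper's transform-the-solution argument is the cleaner path.
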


\begin{proof}
We present the proof for problem~2. The proof for problem~1 is very similar.

Suppose $q\in C^\infty([0,1]\times[0,T])$ is a solution of the problem~\eqref{eqn:introIBVP.2}. Applying the forward transform to $q$ yields
\BE
F_\lambda(q(\cdot,t)) = \begin{cases} \int_0^1 \phi^+(x,\lambda)q(x,t)\d x & \mbox{if } \lambda\in\overline{\C^+}, \\ \int_0^1 \phi^-(x,\lambda)q(x,t)\d x & \mbox{if } \lambda\in\C^-. \end{cases}
\EE
The PDE and integration by parts imply the following:
\begin{align} \notag
\frac{\d}{\d t} F_\lambda(q(\cdot,t)) &= \int_0^1 \phi^\pm(x,\lambda)q_{xxx}(x,t)\d x \\ \notag
&= - \partial_{x}^2q(1,t) \phi^\pm(1,\lambda) + \partial_{x}^2q(0,t) \phi^\pm(0,\lambda) + \partial_{x}q(1,t) \partial_{x}\phi^\pm(1,\lambda) \\ \notag
&\hspace{5em} - \partial_{x}q(0,t) \partial_{x}\phi^\pm(0,\lambda) - q(1,t) \partial_{x}^2\phi^\pm(1,\lambda) \\
&\hspace{5em} + q(0,t) \partial_{x}^2\phi^\pm(0,\lambda) + i\lambda^3 F_\lambda(q(\cdot,t)).
\end{align}
Rearranging, multiplying by $e^{-i\lambda^3t}$ and integrating, we find
\BE
F_\lambda(q(\cdot,t)) = e^{i\lambda^3t}F_\lambda(f) + e^{i\lambda^3t} \sum_{j=0}^2 (-1)^j\left[\partial_{x}^{2-j}\phi^\pm(0,\lambda) Q_j(0,\lambda) - \partial_{x}^{2-j}\phi^\pm(1,\lambda) Q_j(1,\lambda)\right],
\EE
where
\BE
Q_j(x,\lambda) = \int_0^t e^{-i\lambda^3s} \partial_x^j q(x,s) \d s.
\EE
Evaluating $\partial_{x}^{j}\phi^\pm(0,\lambda)$ and $\partial_{x}^{j}\phi^\pm(1,\lambda)$, we obtain
\begin{multline}
F_\lambda(q(\cdot,t)) = e^{i\lambda^3t}F_\lambda(f) + \frac{e^{i\lambda^3t}}{2\pi} \left[ Q_1(1,\lambda)i\lambda(\alpha-\alpha^2)\frac{e^{i\alpha\lambda}-e^{i\alpha^2\lambda}}{\Delta(\lambda)} \right. \\
\left. + Q_0(0,\lambda)\lambda^2\frac{2e^{-i\lambda}-\alpha e^{-i\alpha\lambda}-\alpha^2 e^{-i\alpha^2\lambda}}{\Delta(\lambda)} \right. \\
\left. + Q_0(1,\lambda)\lambda^2\frac{(\alpha^2-1)e^{i\alpha\lambda}+(\alpha-1)e^{i\alpha^2\lambda}}{\Delta(\lambda)} + Q_2(0,\lambda) + Q_1(0,\lambda) i\lambda \right],
\end{multline}
for all $\lambda\in\Gamma^+\cup\Gamma_0$ and
\begin{multline}
F_\lambda(q(\cdot,t)) = e^{i\lambda^3t}F_\lambda(f) + \frac{e^{-i\lambda+i\lambda^3t}}{2\pi} \left[ Q_1(1,\lambda)i\lambda\frac{e^{-i\lambda}+\alpha^2e^{-i\alpha\lambda}+\alpha e^{-i\alpha^2\lambda}}{\Delta(\lambda)} \right. \\
\left. + Q_0(0,\lambda)\lambda^2\frac{3}{\Delta(\lambda)} - Q_0(1,\lambda)\lambda^2\frac{e^{-i\lambda}+e^{-i\alpha\lambda}+e^{-i\alpha^2\lambda}}{\Delta(\lambda)} + Q_2(1,\lambda) \right],
\end{multline}
for all $\lambda\in\Gamma^-$.

Hence, the validity of the transform pair (proposition~\ref{prop:Transforms.valid:LKdV:2}) implies
\begin{multline} \label{eqn:Transform.Method:LKdV.2:q.big}
q(x,t) = \left\{\int_{\Gamma_0}+\int_{\Gamma^+}+\int_{\Gamma^-}\right\} e^{i\lambda x+i\lambda^3t} F_\lambda(f)\d\lambda \\
+\frac{1}{2\pi}\left\{\int_{\Gamma_0}+\int_{\Gamma^+}\right\} e^{i\lambda x+i\lambda^3t} \left[ Q_1(1,\lambda)i\lambda(\alpha-\alpha^2)\frac{e^{i\alpha\lambda}-e^{i\alpha^2\lambda}}{\Delta(\lambda)} \right. \\
\left. + Q_0(0,\lambda)\lambda^2\frac{2e^{-i\lambda}-\alpha e^{-i\alpha\lambda}-\alpha^2 e^{-i\alpha^2\lambda}}{\Delta(\lambda)} \right. \\
\left. + Q_0(1,\lambda)\lambda^2\frac{(\alpha^2-1)e^{i\alpha\lambda}+(\alpha-1)e^{i\alpha^2\lambda}}{\Delta(\lambda)} \right] \d\lambda \\
+\frac{1}{2\pi}\int_{\Gamma^-} e^{i\lambda (x-1)+i\lambda^3t} \left[ Q_1(1,\lambda)i\lambda\frac{e^{-i\lambda}+\alpha^2e^{-i\alpha\lambda}+\alpha e^{-i\alpha^2\lambda}}{\Delta(\lambda)} \right. \\
\left. + Q_0(0,\lambda)\lambda^2\frac{3}{\Delta(\lambda)} - Q_0(1,\lambda)\lambda^2\frac{e^{-i\lambda}+e^{-i\alpha\lambda}+e^{-i\alpha^2\lambda}}{\Delta(\lambda)} \right] \d\lambda \\
+\frac{1}{2\pi}\left\{\int_{\Gamma_0}+\int_{\Gamma^+}\right\} e^{i\lambda x+i\lambda^3t} \left[ Q_2(0,\lambda) + Q_1(0,\lambda) i\lambda \right] \d\lambda \\
+\frac{1}{2\pi}\int_{\Gamma^-} e^{i\lambda(x-1)+i\lambda^3t} Q_2(1,\lambda) \d\lambda.
\end{multline}
Integration by parts yields
\BE
Q_j(x,\lambda) = O(\lambda^{-3}),
\EE
as $\lambda\to\infty$ within the region enclosed by $\Gamma^\pm$. Hence, by Jordan's lemma, the final two lines of equation~\eqref{eqn:Transform.Method:LKdV.2:q.big} vanish. The boundary conditions imply
\BE
Q_0(0,\lambda) = Q_0(1,\lambda) = Q_1(1,\lambda) = 0,
\EE
so the second, third, fourth, fifth and sixth lines of equation~\eqref{eqn:Transform.Method:LKdV.2:q.big} vanish. Hence
\BE
q(x,t) = \left\{\int_{\Gamma_0}+\int_{\Gamma^+}+\int_{\Gamma^-}\right\} e^{i\lambda x+i\lambda^3t} F_\lambda(f)\d\lambda.
\EE
\end{proof}

The above proof also demonstrates how the transform pair may be used to solve a problem with inhomogeneous boundary conditions: consider the problem,
\begin{subequations} \label{eqn:introIBVP.2inhomo}
\begin{align} \label{eqn:introIBVP.2inhomo:PDE}
q_t(x,t) + q_{xxx}(x,t) &= 0 & (x,t) &\in (0,1)\times(0,T), \\
q(x,0) &= \phi(x) & x &\in [0,1], \\
q(0,t) &= h_1(t) & t &\in [0,T], \\
q(1,t) &= h_2(t) & t &\in [0,T], \\
q_x(1,t) &= h_3(t) & t &\in [0,T],
\end{align}
\end{subequations}
for some given boundary data $h_j\in C^\infty[0,1]$. Then $Q_0(0,\lambda)$, $Q_0(1,\lambda)$ and $Q_1(1,\lambda)$ are nonzero, but they are known quantities, namely $t$-transforms of the boundary data. Substituting these values into equation~\eqref{eqn:Transform.Method:LKdV.2:q.big} yields an explicit expression for the solution.

\subsection{General} \label{ssec:Transform.Method:LKdV:General}

\begin{prop} \label{prop:Transform.Method:General:2}
The solution of a type~\textnormal{I} or type~\textnormal{II} IBVP is given by
\BE \label{eqn:Transform.Method:General:prop2:1}
q(x,t) = f_x \left( e^{-a\lambda^nt} F_\lambda(f) \right).
\EE
\end{prop}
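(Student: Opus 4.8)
The plan is to mimic the proof of Proposition~\ref{prop:Transform.Method:LKdV:2} in the general setting, replacing the explicit LKdV computations by the structure provided by the adjoint boundary forms and the matrix $M(\lambda)$. First I would let $q\in C^\infty([0,1]\times[0,T])$ be a solution of the general IBVP~\eqref{eqn:IBVP} (existence and smoothness following from the well-posedness cited from~\cite{Pel2004a,Smi2012a,Smi2013a}) and apply the forward transform $F_\lambda$ to $q(\cdot,t)$. Differentiating in $t$ and using the PDE~\eqref{eqn:IBVP.PDE} to replace $q_t$ by $-aSq = -a(-i\partial_x)^n q$, I would integrate by parts $n$ times. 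This produces the term $-a\lambda^n F_\lambda(q(\cdot,t))$ together with boundary terms at $x=0$ and $x=1$ involving $\partial_x^j q(0,t)$, $\partial_x^j q(1,t)$ and the derivatives $\partial_x^k \phi^\pm(0,\lambda)$, $\partial_x^k \phi^\pm(1,\lambda)$ for $0\le j,k\le n-1$. Solving the resulting linear ODE in $t$ gives
\[
F_\lambda(q(\cdot,t)) = e^{-a\lambda^n t}F_\lambda(f) + e^{-a\lambda^n t}\sum_{j} c_j(\lambda)\, Q_j(\lambda),
\]
where each $Q_j(\lambda) = \int_0^t e^{a\lambda^n s}\partial_x^{j} q(\cdot,s)\d s$ evaluated at an endpoint, and each coefficient $c_j(\lambda)$ is a linear combination of the $\partial_x^k\phi^\pm$ at the endpoints.

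Next I would apply the inverse transform $f_x$ (valid by Proposition~\ref{prop:Transforms.valid:General:2}, since $q(\cdot,t)\in\Phi$) to obtain an expression for $q(x,t)$ as $f_x(e^{-a\lambda^n t}F_\lambda(f))$ plus contour integrals over $\Gamma$ of the boundary-term corrections. The crux is to show these correction terms vanish. There are two mechanisms, exactly as in the LKdV proof: first, integration by parts in $s$ shows $Q_j(\lambda) = O(\lambda^{-n})$ as $\lambda\to\infty$ within the sectors enclosed by $\Gamma_a^\pm$, so that together with the $O(\lambda^{-1})$ decay of the $\phi^\pm$-derivatives (a consequence of the structure of $M(\lambda)$ and Cramer's rule, used already in establishing $F^\pm_\lambda(f) = O(\lambda^{-1})$) the integrands decay and Jordan's lemma kills the contributions of those $Q_j$ corresponding to boundary data that is genuinely free. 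Second, and more importantly, the homogeneous boundary conditions $q(\cdot,t)\in\Phi$ force $B_j q(\cdot,t) = 0$ for all $j$; since the coefficients $c_j(\lambda)$ are built from the \emph{adjoint} boundary forms, the combination of boundary terms appearing is precisely the one annihilated by the condition $q(\cdot,t)\in\Phi$ — this is the content of the Lagrange/Green identity~\eqref{eqn:S.not.s-a}, i.e. the boundary terms organise into $[q\,\psi](1) - [q\,\psi](0)$ with $\psi$ satisfying the adjoint conditions, hence vanish. Putting these together leaves only $q(x,t) = f_x(e^{-a\lambda^n t}F_\lambda(f))$, which is~\eqref{eqn:Transform.Method:General:prop2:1}.

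The main obstacle is the bookkeeping needed to verify that the boundary terms produced by $n$-fold integration by parts really do assemble into the adjoint-form combination that the boundary conditions annihilate; this requires carefully matching the definition of $M^\pm(\lambda)$ in~\eqref{eqn:M.defn} and the adjoint coefficients $\Msup{b}{j}{k}{\star}$, $\Msup{\beta}{j}{k}{\star}$ against the endpoint values $\partial_x^k\phi^\pm(0,\lambda)$, $\partial_x^k\phi^\pm(1,\lambda)$ computed from~\eqref{eqn:defn.Fpm.rho}. Once the correspondence $c_j(\lambda)\leftrightarrow$ (entries of the adjoint characteristic matrix) is pinned down, the argument is the same as in Proposition~\ref{prop:Transform.Method:LKdV:2}: a Jordan's-lemma estimate for the terms with nonzero but decaying $Q_j$, and a direct cancellation for the terms whose $Q_j$ are forced to zero by $q(\cdot,t)\in\Phi$. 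I would also remark, as the paper does after Proposition~\ref{prop:Transform.Method:LKdV:2}, that for inhomogeneous boundary data the same computation yields an explicit solution formula, since the surviving $Q_j(\lambda)$ are then known $t$-transforms of the prescribed boundary functions rather than being zero.
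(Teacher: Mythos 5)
Your overall route (apply $F^\pm_\lambda$ to $q(\cdot,t)$, use the PDE to get an ODE in $t$, solve it, invert via Proposition~\ref{prop:Transforms.valid:General:2}, and kill the boundary corrections) is the paper's route, but the step you single out as the crux is justified incorrectly. You claim the boundary terms organise into $[q\,\phi^\pm_\lambda](1)-[q\,\phi^\pm_\lambda](0)$ with $\phi^\pm_\lambda$ ``satisfying the adjoint conditions, hence vanish''. But $\phi^\pm_\lambda$ does \emph{not} satisfy the adjoint boundary conditions: if it did, $F^\pm_\lambda$ would be genuine eigenfunctions with zero remainder and the augmented-eigenfunction framework of the paper would be unnecessary. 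What the Green identity~\eqref{eqn:S.not.s-a} actually yields, via the boundary form formula~\eqref{eqn:propGEInt.3}, is the splitting $[q\,\phi^\pm_\lambda](1)-[q\,\phi^\pm_\lambda](0)=Bq\cdot B_c^\star\phi^\pm_\lambda+B_cq\cdot B^\star\phi^\pm_\lambda$; the hypothesis $q(\cdot,t)\in\Phi$ annihilates only the first piece, while the second piece survives and equals a nonzero remainder. The paper's Lemma~\ref{lem:GEInt1} identifies it: using $\sum_{l}\det\Msups{X}{}{}{l}{j}(\lambda)\M{M}{l}{k}(\lambda)=\Delta(\lambda)\M{\delta}{j}{k}$ one gets $B_k^\star\overline{\phi^+_\lambda}=\frac{1}{2\pi}\Msup{M}{1}{k}{+}(\lambda)$ and $B_k^\star\overline{\phi^-_\lambda}=\frac{-e^{-i\lambda}}{2\pi}\Msup{M}{1}{k}{-}(\lambda)$, so the surviving remainder is a polynomial $P^+_{q(\cdot,t)}(\lambda)$ of degree at most $n-1$, respectively $P^-_{q(\cdot,t)}(\lambda)e^{-i\lambda}$. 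This cancellation of the $\Delta$-denominators is exactly the bookkeeping you defer, and without it the argument does not close.

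The second, related gap is that you never treat the components $\Gamma_0^\pm$ of the inversion contour. These are closed circles around the zeros of $\Delta$, i.e.\ around the poles of the endpoint values $\partial_x^k\phi^\pm_\lambda$, so neither $Bq=0$ nor Jordan's lemma disposes of the corrections there; they vanish because, after the reorganisation above, the remainder is \emph{entire} in $\lambda$ (a polynomial, times $e^{-i\lambda}$ in the minus case), so the closed-contour integrals are zero by Cauchy's theorem. On the unbounded components $\Gamma_a^\pm$ your Jordan's-lemma argument is the right idea, but the decay accounting is off: the surviving coefficients grow like $\lambda^{n-1}$ (they are the polynomials $\Msup{M}{1}{k}{\pm}$, not $O(\lambda^{-1})$ quantities); the decay comes from integration by parts in $s$ in $e^{-a\lambda^nt}\int_0^t e^{a\lambda^ns}P^\pm_{q(\cdot,s)}(\lambda)\d s$, giving a net $O(\lambda^{-1})$ integrand in the regions enclosed by $\Gamma_a^\pm$, which is what Jordan's lemma needs. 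With Lemma~\ref{lem:GEInt1} in place, the rest of your outline coincides with the paper's proof.
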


\begin{lem} \label{lem:GEInt1}
Let $f\in\Phi$ and $S$ be the spatial differential operator defined in equation~\eqref{eqn:defn.S}. Then there exist polynomials $P^\pm_f$ of degree at most $n-1$ such that
\begin{subequations}
\begin{align}
F^+_\lambda(Sf) &= \lambda^n F^+_\lambda(f) + P^+_f(\lambda), \\
F^-_\lambda(Sf) &= \lambda^n F^-_\lambda(f) + P^-_f(\lambda) e^{-i\lambda}.
\end{align}
\end{subequations}
\end{lem}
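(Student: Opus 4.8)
The plan is to integrate by parts $n$ times inside each integral $\int_0^1 e^{-i\alpha^{l-1}\lambda x}(Sf)(x)\,\d x$ occurring in the definition~\eqref{eqn:defn.Fpm.rho} of $F^\pm_\lambda(Sf)$. Because $S\phi=(-i)^n\phi^{(n)}$ and, crucially, $(\alpha^{l-1})^n=1$, this produces, for every $l$,
\BE
\int_0^1 e^{-i\alpha^{l-1}\lambda x}(Sf)(x)\,\d x \;=\; \lambda^n\int_0^1 e^{-i\alpha^{l-1}\lambda x}f(x)\,\d x \;+\; c_l(\lambda),
\EE
with remainder
\BE
c_l(\lambda) \;=\; (-i)^n\sum_{r=0}^{n-1}(-1)^r(-i\alpha^{l-1}\lambda)^r\bigl(e^{-i\alpha^{l-1}\lambda}f^{(n-1-r)}(1)-f^{(n-1-r)}(0)\bigr),
\EE
a combination of the $2n$ numbers $f^{(k)}(0),f^{(k)}(1)$ ($0\le k\le n-1$) with coefficients that are monomials in $\lambda$ of degree $\le n-1$, those attached to $f^{(k)}(1)$ bearing the extra factor $e^{-i\alpha^{l-1}\lambda}$. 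Substituting into~\eqref{eqn:defn.Fpm.rho} gives immediately
\BE
F^\pm_\lambda(Sf) \;=\; \lambda^n F^\pm_\lambda(f) + \mathcal{B}^\pm_f(\lambda), \qquad \mathcal{B}^+_f(\lambda)=\frac{1}{2\pi\Delta(\lambda)}\sum_{l=1}^{n}\sum_{j=1}^{n}\det X^{lj}(\lambda)\,M^+_{1j}(\lambda)\,c_l(\lambda),
\EE
and $\mathcal{B}^-_f$ is the same with $M^-_{1j}$ in place of $M^+_{1j}$ and an extra factor $-e^{-i\lambda}$. It remains to prove that $\mathcal{B}^+_f$ and $e^{i\lambda}\mathcal{B}^-_f$ are polynomials of degree $\le n-1$; these are then $P^+_f$ and $P^-_f$.

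Both $\mathcal{B}^+_f$ and $e^{i\lambda}\mathcal{B}^-_f$ are a priori meromorphic with poles only at the zeros of $\Delta=\det M$, which are isolated and of finite order~\cite{Lan1931a}. Showing that these poles are removable is the heart of the matter, and the step that genuinely uses $f\in\Phi$, i.e.\ $B_jf=0$ for all $j$; without this hypothesis the statement is false, as one sees in the proof of Proposition~\ref{prop:Transform.Method:LKdV:2}, where the non-polynomial boundary contributions are precisely those multiplying the data $Q_0(0,\lambda),Q_0(1,\lambda),Q_1(1,\lambda)$ that the boundary conditions kill. The mechanism is algebraic: as a function of $\lambda$, $c_l(\lambda)$ has exactly the shape of the entries $M^\pm_{lj}(\lambda)=\sum_r(-i\alpha^{l-1}\lambda)^r\{b^{\star}_{jr}\text{ or }\beta^{\star}_{jr}\}$ of Birkhoff's adjoint characteristic matrix~\eqref{eqn:M.defn} — it is $(-i)^n$ times a boundary form of $f$ evaluated at frequency $\alpha^{l-1}\lambda$. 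Eliminating $n$ of the $2n$ boundary values of $f$ via the relations $B_jf=0$, and invoking the cofactor/row-expansion identities for $\Delta=\det M$ together with the splitting $M_{kj}=M^+_{kj}+M^-_{kj}e^{-i\alpha^{k-1}\lambda}$, one checks that $\sum_{l,j}\det X^{lj}(\lambda)M^+_{1j}(\lambda)c_l(\lambda)$ equals $\Delta(\lambda)$ times a polynomial of degree $\le n-1$ determined by the data $f^{(k)}(0)$, and likewise the $M^-_{1j}$-sum is $\Delta(\lambda)$ times a polynomial governed by the data $f^{(k)}(1)$, the residual factors $e^{-i\alpha^{l-1}\lambda}$ recombining through the $M_{kj}$ into the single $e^{-i\lambda}$. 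This divisibility-by-$\Delta$ identity — in effect an $n$-fold, $\lambda$-parametrised iteration of the adjointness relation~\eqref{eqn:S.not.s-a}, whose concrete $n=3$ instances are the cancellations carried out by hand in Section~\ref{ssec:Transform.Method:LKdV} — is the one nontrivial step; the remaining bookkeeping is routine.

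Granting the cancellation, $\mathcal{B}^+_f$ and $e^{i\lambda}\mathcal{B}^-_f$ are exhibited directly as polynomials of degree $\le n-1$, and one reads off $P^\pm_f$; as a cross-check, the instance of the elementary identity~\eqref{eqn:F+-F-.is.phihat} with $f$ replaced by $Sf$, namely $F^+_\lambda(Sf)-F^-_\lambda(Sf)=\tfrac{1}{2\pi}\widehat{Sf}(\lambda)$ combined with $\widehat{Sf}(\lambda)=\lambda^n\hat f(\lambda)+c_1(\lambda)$, forces
\BE
P^+_f(\lambda) = -\frac{(-i)^n}{2\pi}\sum_{r=0}^{n-1}(-1)^r(-i\lambda)^r f^{(n-1-r)}(0), \qquad P^-_f(\lambda) = -\frac{(-i)^n}{2\pi}\sum_{r=0}^{n-1}(-1)^r(-i\lambda)^r f^{(n-1-r)}(1),
\EE
the degree bound being then manifest. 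If one prefers to sidestep the determinant algebra, an alternative to the last step is analytic: having shown (still using $f\in\Phi$) that the poles cancel, $\mathcal{B}^+_f$ and $e^{i\lambda}\mathcal{B}^-_f$ are entire; the Birkhoff-type growth estimates of~\cite{Smi2012a} used in the proof of Proposition~\ref{prop:Transforms.valid:General:2} bound $F^+_\lambda(g)$ and $e^{i\lambda}F^-_\lambda(g)$ polynomially in $\lambda$, off fixed disks about the zeros of $\Delta$, for any $g\in C$, so $\mathcal{B}^+_f$ and $e^{i\lambda}\mathcal{B}^-_f$ have at most polynomial growth there; by the maximum principle an entire function of polynomial growth away from a discrete family of fixed-radius disks is a polynomial, and its degree is then pinned to $\le n-1$ by the difference relation above. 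In either route the decisive obstacle is the same: the divisibility of the numerators by $\Delta$, which is exactly what the adjoint boundary forms $B_j^\star$ are built to guarantee.
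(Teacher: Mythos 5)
Your skeleton is the paper's: integrate by parts $n$ times, use $(\alpha^{l-1})^n=1$ to pull out $\lambda^n$, and reduce the lemma to showing that the boundary remainder, which a priori carries $\Delta(\lambda)$ in its denominator, is in fact $\Delta(\lambda)$ times a polynomial of degree at most $n-1$ (times $e^{-i\lambda}$ in the $-$ case). But that divisibility claim --- which you yourself call the heart of the matter --- is precisely where you write ``one checks'' and stop, so the only nontrivial assertion of the lemma is never proved. Moreover, the route you gesture at (eliminate $n$ of the $2n$ boundary values of $f$ via $B_jf=0$, then invoke cofactor identities) does not, as stated, produce the cancellation: after an ad hoc elimination of boundary values there is no reason for the coefficients multiplying $\det X^{lj}(\lambda)\,M^{+}_{1j}(\lambda)$ to organize themselves into the entries of the characteristic matrix, which is what the cofactor identity $\sum_{l}\det X^{lj}(\lambda)\,M_{lk}(\lambda)=\Delta(\lambda)\delta_{jk}$ needs in order to bite. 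The paper closes this step with two specific ingredients absent from your argument: the boundary form formula \cite[Theorem~11.2.1]{CL1955a}, which supplies complementary forms $B_c,B_c^\star$ with $[f\phi^\pm_\lambda](1)-[f\phi^\pm_\lambda](0)=Bf\cdot B_c^\star\phi^\pm_\lambda+B_cf\cdot B^\star\phi^\pm_\lambda$, so that $f\in\Phi$ (i.e.\ $Bf=0$) kills the first term; and the observation that the adjoint boundary forms applied to the exponentials reproduce exactly the rows of the adjoint characteristic matrix, $B_k^\star\bigl(e^{-i\alpha^{l-1}\lambda\,\cdot}\bigr)=M_{lk}(\lambda)$. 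With these, the double sum collapses via the cofactor identity to $B_k^\star\overline{\phi^+_\lambda}=\tfrac{1}{2\pi}M^{+}_{1k}(\lambda)$ and $B_k^\star\overline{\phi^-_\lambda}=\tfrac{-e^{-i\lambda}}{2\pi}M^{-}_{1k}(\lambda)$, and the lemma follows because $M^{\pm}_{1k}$ are polynomials of degree at most $n-1$.

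Two further remarks. Your explicit formulas for $P^\pm_f$, extracted from $F^+_\lambda(Sf)-F^-_\lambda(Sf)=\tfrac{1}{2\pi}\widehat{Sf}(\lambda)$, are consistent (they reproduce, e.g., the remainders for problem~2), but the matching of the polynomial part against the $e^{-i\lambda}$ part is only legitimate once you already know $P^\pm_f$ are polynomials; so this is a consistency check, not a substitute for the missing step. Likewise your analytic fallback (entire plus polynomial growth off fixed disks implies polynomial) presupposes that the poles at the zeros of $\Delta$ cancel, which is again the same unproven divisibility, as you concede. In short: right setup, right final formulas, but the decisive cancellation must be carried out --- most cleanly exactly as in the paper, through the boundary form formula and the identification of $B_k^\star$ applied to the exponentials with the rows of $M$.
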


\begin{proof}
Let $(\phi,\psi)$ be the usual inner product $\int_0^1\phi(x)\overline{\psi}(x)\d x$. For any $\lambda\in\Gamma$, we can represent $F^\pm_\lambda$ as the inner product $F^\pm_\lambda(f)=(f,\phi^\pm_\lambda)$, for the function $\phi^\pm_\lambda(x)$, smooth in $x$ and meromorphic in $\lambda$, defined by
\begin{subequations} \label{eqn:defn.fpm.rho}
\begin{align}
\overline{\phi^+_\lambda}(x) &= \frac{1}{2\pi\Delta(\lambda)} \sum_{l=1}^n\sum_{j=1}^n (-1)^{(n-1)(l+j)} \det \Msups{X}{}{}{l}{j}(\lambda) \Msup{M}{1}{j}{+}(\lambda) e^{-i\alpha^{l-1}\lambda x}, \\
\overline{\phi^-_\lambda}(x) &= \frac{-e^{-i\lambda}}{2\pi\Delta(\lambda)} \sum_{l=1}^n\sum_{j=1}^n (-1)^{(n-1)(l+j)} \det \Msups{X}{}{}{l}{j}(\lambda) \Msup{M}{1}{j}{-}(\lambda) e^{-i\alpha^{l-1}\lambda x}.
\end{align}
\end{subequations}

As $\phi^\pm_\lambda$, $Sf\in C^\infty[0,1]$ and $\alpha^{(l-1)n}=1$, equation~\eqref{eqn:S.not.s-a} yields
\BE
F^\pm_\lambda(Sf) = \lambda^n F^\pm_\lambda(f) + [f \phi^\pm_\lambda](1) - [f \phi^\pm_\lambda](0).
\EE
If $B$, $B^\star:C^\infty[0,1]\to\C^n$, are the real vector boundary forms
\BE
B=(B_1,B_2,\ldots,B_n), \qquad B^\star=(B_1^\star,B_2^\star,\ldots,B_n^\star),
\EE
then the boundary form formula~\cite[theorem~11.2.1]{CL1955a} guarantees the existance of complimentary vector boundary forms $B_c$, $B_c^\star$ such that
\BE \label{eqn:propGEInt.3}
[f \phi^\pm_\lambda](1) - [f \phi^\pm_\lambda](0) = Bf \cdot B_c^\star \phi^\pm_\lambda + B_cf \cdot B^\star \phi^\pm_\lambda,
\EE
where $\cdot$ is the sesquilinear dot product. We consider the right hand side of equation~\eqref{eqn:propGEInt.3} as a function of $\lambda$. As $Bf=0$, this expression is a linear combination of the functions $B_k^\star\overline{\phi^\pm}_\lambda$ of $\lambda$, with coefficients given by the complementary boundary forms.

The definitions of $B_k^\star$ and $\phi_\lambda^+$ imply
\begin{align} \notag
B_k^\star\overline{\phi^+_\lambda} &= \frac{1}{2\pi\Delta(\lambda)} \sum_{l=1}^n\sum_{j=1}^n (-1)^{(n-1)(l+j)} \det \Msups{X}{}{}{l}{j}(\lambda) \Msup{M}{1}{j}{+}(\lambda) B_k^\star(e^{-i\alpha^{l-1}\lambda \cdot}) \\ \notag
&= \frac{1}{2\pi\Delta(\lambda)} \sum_{l=1}^n\sum_{j=1}^n (-1)^{(n-1)(l+j)} \det \Msups{X}{}{}{l}{j}(\lambda) \Msup{M}{1}{j}{+}(\lambda) \M{M}{l}{k}(\lambda).
\end{align}
But
\BE
\sum_{l=1}^n (-1)^{(n-1)(l+j)} \det\Msups{X}{}{}{l}{j}(\lambda)\M{M}{l}{k}(\lambda) = \Delta(\lambda)\M{\delta}{j}{k},
\EE
so
\begin{subequations}
\begin{align}
B_k^\star \overline{\phi^+_\lambda} &= \frac{1}{2\pi}\Msup{M}{1}{k}{+}(\lambda). \\
\intertext{Similarly,}
B_k^\star \overline{\phi^-_\lambda} &= \frac{-e^{-i\lambda}}{2\pi}\Msup{M}{1}{k}{-}(\lambda).
\end{align}
\end{subequations}
Finally, by equations~\eqref{eqn:M.defn}, $\Msup{M}{1}{k}{\pm}$ are polynomials of order at most $n-1$.
\end{proof}

\begin{proof}[Proof of proposition~\ref{prop:Transform.Method:General:2}]
Let $q$ be the solution of the problem. Then, since $q$ satisfies the partial differential equation~\eqref{eqn:IBVP.PDE},
\BE
\frac{\d}{\d t} F^+_\lambda(q(\cdot,t)) = -aF^+_\lambda(S(q(\cdot,t))) = -a\lambda^n F^+_\lambda(q(\cdot,t)) - a P^+_{q(\cdot,t)}(\lambda),
\EE
where, by lemma~\ref{lem:GEInt1}, $P^+_{q(\cdot,t)}$ is a polynomial of degree at most $n-1$. Hence
\BE
\frac{\d}{\d t} \left( e^{a\lambda^nt}F^+_\lambda(q(\cdot,t)) \right) = -ae^{a\lambda^nt}P^+_{q(\cdot,t)}(\lambda).
\EE
Integrating with respect to $t$ and applying the initial condition~\eqref{eqn:IBVP.IC}, we find
\BE \label{eqn:Transform.Method:General:1proof.1}
F^+_\lambda(q(\cdot,t)) = e^{-a\lambda^nt}F^+_\lambda(f) -a e^{-a\lambda^nt} \int_0^t e^{a\lambda^ns} P^+_{q(\cdot,s)}(\lambda)\d s.
\EE
Similarly,
\BE \label{eqn:Transform.Method:General:1proof.2}
F^-_\lambda(q(\cdot,t)) = e^{-a\lambda^nt}F^-_\lambda(f) -a e^{-i\lambda-a\lambda^nt} \int_0^t e^{a\lambda^ns} P^-_{q(\cdot,s)}(\lambda)\d s,
\EE
where $P^-_{q(\cdot,t)}$ is another polynomial of degree at most $n-1$. The validity of the type~II transform pair, proposition~\ref{prop:Transforms.valid:General:2}, implies
\begin{multline} \label{eqn:Transform.Method:General:2proof.3}
q(x,t) = \int_{\Gamma^+} e^{i\lambda x-a\lambda^nt}F^+_\lambda(f) \d\lambda + \int_{\Gamma^-} e^{i\lambda(x-1)-a\lambda^nt}F^-_\lambda(f) \d\lambda \\
-a \int_{\Gamma^+_0} e^{i\lambda x-a\lambda^nt} \left(\int_0^t e^{a\lambda^ns} P^+_{q(\cdot,s)}(\lambda) \d s\right) \d\lambda \\
-a \int_{\Gamma^-_0} e^{i\lambda(x-1)-a\lambda^nt} \left(\int_0^t e^{a\lambda^ns} P^-_{q(\cdot,s)}(\lambda) \d s\right) \d\lambda \\
-a \int_{\Gamma^+_a} e^{i\lambda x-a\lambda^nt} \left(\int_0^t e^{a\lambda^ns} P^+_{q(\cdot,s)}(\lambda) \d s\right) \d\lambda \\
-a \int_{\Gamma^-_a} e^{i\lambda(x-1)-a\lambda^nt} \left(\int_0^t e^{a\lambda^ns} P^-_{q(\cdot,s)}(\lambda) \d s\right) \d\lambda.
\end{multline}
As $P^\pm_{q(\cdot,s)}$ are polynomials, the integrands
\BES
e^{i\lambda x-a\lambda^nt} \left(\int_0^t e^{a\lambda^ns} P^+_{q(\cdot,s)}(\lambda) \d s\right) \mbox{ and } e^{i\lambda(x-1)-a\lambda^nt} \left(\int_0^t e^{a\lambda^ns} P^-_{q(\cdot,s)}(\lambda) \d s\right)
\EES
are both entire functions of $\lambda$. Hence the third and fourth terms of equation~\eqref{eqn:Transform.Method:General:2proof.3} vanish. Integration by parts yields
\begin{align*}
e^{i\lambda x-a\lambda^nt} \left(\int_0^t e^{a\lambda^ns} P^+_{q(\cdot,s)}(\lambda) \d s\right) &= O(\lambda^{-1}) \mbox{ as } \lambda\to\infty\; \begin{matrix}\mbox{ within the region }\\\mbox{ enclosed by } \Gamma^+_a,\end{matrix} \\
e^{i\lambda(x-1)-a\lambda^nt} \left(\int_0^t e^{a\lambda^ns} P^-_{q(\cdot,s)}(\lambda) \d s\right) &= O(\lambda^{-1}) \mbox{ as } \lambda\to\infty\; \begin{matrix}\mbox{ within the region }\\\mbox{ enclosed by } \Gamma^-_a.\end{matrix}
\end{align*}
Hence, by Jordan's lemma, the final two terms of equation~\eqref{eqn:Transform.Method:General:2proof.3} vanish.
\end{proof}

\begin{rmk}
The same method may be used to solve IBVP with inhomogeneous boundary conditions. The primary difference is that statement~\eqref{eqn:F+-F-.is.phihat} must be replaced with~\cite[lemma~4.1]{Smi2012a}.
\end{rmk}

\section{Analysis of the transform pair} \label{sec:Spectral}

In this section we analyse the spectral properties of the transform pairs using the notion of augmented eigenfunctions.

\subsection{Linearized KdV} \label{ssec:Spectral:LKdV}

\subsubsection*{Augmented Eigenfunctions}

Let $S^{(\mathrm{I})}$ and $S^{(\mathrm{II})}$ be the differential operators representing the spatial parts of the IBVP~1 and~2, respectively. Each operator is a restriction of the same formal differential operator, $(-i\d/\d x)^3$ to the domain of initial data compatible with the boundary conditions of the problem:
\begin{align} \label{eqn:introS1}
\mathcal{D}(S^{(\mathrm{I})})  &= \{f\in C^\infty[0,1]: f(0)=f(1)=0,\; f'(0)=2f'(1)\}, \\ \label{eqn:introS2}
\mathcal{D}(S^{(\mathrm{II})}) &= \{f\in C^\infty[0,1]: f(0)=f(1)=f'(1)=0\}.
\end{align}

A simple calculation reveals that $\{F_\lambda:\lambda\in\Gamma_0\}$ (where $F_\lambda$ is defined by equations~\eqref{eqn:introTrans.1.1a},~\eqref{eqn:introTrans.1.1c} and~\eqref{eqn:introTrans.1.1d}) is a family of type~I augmented eigenfunctions of $S^{(\mathrm{I})}$. Indeed, integration by parts yields
\BE \label{eqn:introS1.Rphi}
F_\lambda(S^{(\mathrm{I})}f) = \begin{cases} \lambda^3 F_\lambda(f) + \left( - \displaystyle\frac{i}{2\pi}f''(0) + \displaystyle\frac{\lambda}{2\pi}f'(0) \right) & \lambda\in\Gamma^+\cup\Gamma_0, \\ \lambda^3 F_\lambda(f) + \left( - \displaystyle\frac{ie^{-i\lambda}}{2\pi}f''(1) + \displaystyle\frac{\lambda e^{-i\lambda}}{2\pi}f'(1) \right) & \lambda\in\Gamma^-. \end{cases}
\EE
For any $f$, the remainder functional, which is enclosed in parentheses, can be analytically extended onto the regions lying to the left of $\Gamma^+\cup\Gamma^-\cup\Gamma_0$. The contour $\Gamma_0$ is closed and circular hence~\eqref{eqn:defnAugEig.Control1} holds.

In the same way $\{F_\lambda:\lambda\in\Gamma_0\}$, where $F_\lambda$ is defined by equations~~\eqref{eqn:introTrans.1.1a},~\eqref{eqn:introTrans.1.1c} and~\eqref{eqn:introTrans.1.1d} is a family of type~I augmented eigenfunctions of $S^{(\mathrm{II})}$. Indeed
\BE \label{eqn:introS2.Rphi}
F_\lambda(S^{(\mathrm{II})}f) = \begin{cases} \lambda^3 F_\lambda(f) + \left( - \displaystyle\frac{i}{2\pi}f''(0) - \displaystyle\frac{\lambda}{2\pi}f'(0) \right) & \lambda\in\Gamma^+\cup\Gamma_0, \\ \lambda^3 F_\lambda(f) + \left( - \displaystyle\frac{i e^{-i\lambda}}{2\pi}f''(1) \right) & \lambda\in\Gamma^-, \end{cases}
\EE
so the remainder functional is again analytically extensible.

Furthermore, the ratio of the remainder functionals to the eigenvalue is a rational function with no pole in the regions enclosed by $\Gamma^\pm$ and decaying as $\lambda\to\infty$. Jordan's lemma implies~\eqref{eqn:defnAugEig.Control2} hence $\{F_\lambda:\lambda\in\Gamma^+\cup\Gamma^-\}$ is a family of type~II augmented eigenfunctions of the corresponding $S^{(\mathrm{I})}$ or $S^{(\mathrm{II})}$.

\subsubsection*{Spectral representation of $S^{(\mathrm{II})}$}

We have shown above that $\{F_\lambda:\lambda\in\Gamma_0\}$ is a family of type~I augmented eigenfunctions and $\{F_\lambda:\lambda\in\Gamma^+\cup\Gamma^-\}$ is a family of type~II augmented eigenfunctions of $S^{(\mathrm{II})}$, each with eigenvalue $\lambda^3$. It remains to show that the integrals
\BE
\int_{\Gamma_0}e^{i\lambda x}F_\lambda(Sf)\d\lambda, \qquad \int_{\Gamma^+\cup\Gamma^-}e^{i\lambda x}F_\lambda(f)\d\lambda
\EE
converge.

A simple calculation reveals that $F_\lambda(\psi)$ has a removable singularity at $\lambda=0$, for any $\psi\in C$. Hence the first integral not only converges but evaluates to $0$. Thus, the second integral represents $f_x(F_\lambda(f))=f$ and converges by proposition~\ref{prop:Transforms.valid:LKdV:2}.

This completes the proof of theorem~\ref{thm:Diag:LKdV:1} for problem~2.

\subsubsection*{Spectral representation of $S^{(\mathrm{I})}$}

By the above argument, it is clear that the transform pair $(F_\lambda,f_x)$ defined by equations~\eqref{eqn:introIBVP.solution.2} provides a spectral representation of $S^{(\mathrm{I})}$ in the sense of definition~\ref{defn:Spect.Rep.I.II}, verifying theorem~\ref{thm:Diag:LKdV:1} for problem~1.

It is clear that $\{F_\lambda:\lambda\in\Gamma^\pm\}$ is not a family of type~I augmented eigenfunctions, so the representation~\eqref{eqn:introIBVP.solution.2} does \emph{not} provide a spectral representation of $S^{(\mathrm{I})}$ in the sense of definition~\ref{defn:Spect.Rep.I}. However, equation~\eqref{eqn:introIBVP.solution.1} does provide a representation in the sense of definition~\ref{defn:Spect.Rep.I}. Indeed, equation~\eqref{eqn:introIBVP.solution.1} implies that it is possible to deform the contours $\Gamma^\pm$ onto
\BES
\bigcup_{\substack{\sigma\in\C:\\\Delta(\sigma)=0}}\Gamma_\sigma.
\EES
It is possible to make this deformation without any reference to the IBVP. By an argument similar to that in the proof of proposition~2.1, we are able to `close' (whereas in the earlier proof we `opened') the contours $\Gamma^\pm$ onto simple circular contours each enclosing a single zero of $\Delta$. Thus, an equivalent inverse transform is given by~\eqref{eqn:introTrans.1.2}. It is clear that, for each $\sigma$ a zero of $\Delta$, $\{F_\lambda:\lambda\in\Gamma_\sigma\}$ is a family of type~I augmented eigenfunctions of $S^{(\mathrm{I})}$ up to integration over $\Gamma_\sigma$.

It remains to show that the series
\BE \label{eqn:LKdV1.Sphi.Series}
\sum_{\substack{\sigma\in\C:\\\Delta(\sigma)=0}} \int_{\Gamma_\sigma} e^{i\lambda x} F_\lambda(Sf) \d\lambda
\EE
converges. The validity of the transform pair $(F_\lambda,f^\Sigma_x)$ given by~\eqref{eqn:introTrans.1.1a},~\eqref{eqn:introTrans.1.1c},~\eqref{eqn:introTrans.1.1d} and~\eqref{eqn:introTrans.1.2} is insufficient to justify this convergence since, in general, $Sf$ may not satisfy the boundary conditions, so $Sf$ may not be a valid initial datum of the problem. It is possible to circumvent this difficulty, as demonstrated in the proof of theorem~\ref{thm:Diag.2} below, but here we demonstrate convergence directly.

The augmented eigenfunctions $F_\lambda$ are meromorphic functions of $\lambda$, represented in their definition~\eqref{eqn:introTrans.1.1a},~\eqref{eqn:introTrans.1.1c},~\eqref{eqn:introTrans.1.1d} as the ratio of two entire functions, with singularities only at the zeros of the exponential polynomial $\Delta$. The theory of exponential polynomials~\cite{Lan1931a} implies that the only zeros of $\Delta$ are of finite order, so each integral in the series converges and is equal to the residue of the pole at $\sigma$. Furthermore, an asymptotic calculation reveals that these zeros are at $0$, $\alpha^j\lambda_k$, $\alpha^j\mu_k$, for each $j\in\{0,1,2\}$ and $k\in\N$, where
\begin{align} \label{eqn:LKdV.1.lambdak}
\lambda_k &=  \left(2k-\frac{1}{3}\right)\pi + i\log2 + O\left( e^{-\sqrt{3}k\pi} \right), \\
\mu_k     &= -\left(2k-\frac{1}{3}\right)\pi + i\log2 + O\left( e^{-\sqrt{3}k\pi} \right).
\end{align} \label{eqn:LKdV.1.muk}
Evaluating the first derivative of $\Delta$ at these zeros, we find
\begin{align}
\Delta'(\lambda_k) &= (-1)^{k+1}\sqrt{2}e^{ i\frac{\sqrt{3}}{2}\log2} e^{\sqrt{3}\pi(k-1/6)} + O(1), \\
\Delta'(\mu_k)     &= (-1)^{k}  \sqrt{2}e^{-i\frac{\sqrt{3}}{2}\log2} e^{\sqrt{3}\pi(k-1/6)} + O(1).
\end{align}
Hence, at most finitely many zeros of $\Delta$ are of order greater than $1$. A straightforward calculation reveals that $0$ is a removable singularity. Hence, via a residue calculation and integration by parts, we find that we can represent the tail of the series~\eqref{eqn:LKdV1.Sphi.Series} in the form
\begin{subequations}
\begin{multline}
i\sum_{k=N}^\infty \left\{ \frac{1}{\lambda_k\Delta'(\lambda_k)} \left[ e^{i\lambda_kx}\left((Sf)(1)Y_1(\lambda_k)-(Sf)(0)Y_0(\lambda_k)\right) \right.\right. \\
+ \alpha^2 e^{i\alpha\lambda_kx}\left((Sf)(1)Y_1(\alpha\lambda_k)-(Sf)(0)Y_0(\alpha\lambda_k)\right) \\
\hspace{18ex} \left. - \alpha e^{i\alpha^2\lambda_k(x-1)}\left((Sf)(1)Z_1(\alpha^2\lambda_k)-(Sf)(0)Z_0(\alpha^2\lambda_k)\right) \right] \\
+ \frac{1}{\mu_k\Delta'(\mu_k)} \left[ e^{i\mu_kx}\left((Sf)(1)Y_1(\mu_k)-(Sf)(0)Y_0(\mu_k)\right) \right. \hspace{23ex} \\
- \alpha^2 e^{i\alpha\mu_k(x-1)}\left((Sf)(1)Z_1(\alpha\mu_k)-(Sf)(0)Z_0(\alpha\mu_k)\right) \\
\left.\left. + \alpha e^{i\alpha^2\mu_kx}\left((Sf)(1)Y_1(\alpha^2\mu_k)-(Sf)(0)Y_0(\alpha^2\mu_k)\right) \right] + O(k^{-2}) \right\},
\end{multline}
where
\begin{align}
Y_1(\lambda) &= 3 + 2(\alpha^2-1)e^{i\alpha\lambda} + 2(\alpha-1)e^{i\alpha^2\lambda}, \\
Y_0(\lambda) &= e^{i\lambda}+e^{i\alpha\lambda}+e^{i\alpha^2\lambda}-4e^{-i\lambda}+2e^{-i\alpha\lambda}+2e^{-i\alpha^2\lambda}, \\
Z_1(\lambda) &= \alpha e^{i\alpha\lambda} + 2e^{-i\alpha\lambda} + \alpha^2 e^{i\alpha^2\lambda} + 2e^{-i\alpha^2\lambda}, \\
Z_0(\lambda) &= 6 + (\alpha^2-1)e^{-i\alpha\lambda} + (\alpha-1)e^{-i\alpha^2\lambda}.
\end{align}
\end{subequations}
As $Y_j$, $Z_j\in O(\exp(\sqrt{3}\pi k))$, the Riemann-Lebesgue lemma guarantees conditional convergence for all $x\in(0,1)$.

This completes the proof of theorem~\ref{thm:Diag:LKdV:2}.

\begin{rmk} \label{rmk:diag.type2only.LKdV}
We observed above that $0$ is a removable singularity of $F_\lambda$ defined by~\eqref{eqn:introTrans.1.1a},~\eqref{eqn:introTrans.1.1c} and~\eqref{eqn:introTrans.1.1d}. The same holds for $F_\lambda$ defined by~\eqref{eqn:introTrans.1.1a},~\eqref{eqn:introTrans.1.1e} and~\eqref{eqn:introTrans.1.1f}. Hence, for both problems~1 and~2,
\BE
\int_{\Gamma_0}e^{i\lambda x}F_\lambda(f)\d\lambda = 0
\EE
and we could redefine the inverse transform~\eqref{eqn:introTrans.1.1b} as
\begin{align}
F(\lambda) &\mapsto f(x): & f_x(F) &= \left\{ \int_{\Gamma^+} + \int_{\Gamma^-} \right\} e^{i\lambda x} F(\lambda) \d\lambda, \qquad x\in[0,1].
\end{align}
This permits spectral representations of both $S^{(\mathrm{I})}$ and $S^{(\mathrm{II})}$ via augmented eigenfunctions of type~II \emph{only}, that is spectral representations in the sense of definition~\ref{defn:Spect.Rep.I.II} but with $E^{(\mathrm{I})}=\emptyset$.
\end{rmk}

\subsection{General} \label{ssec:Spectral:General}

We will show that the transform pair $(F_\lambda,f_x)$  defined by equations~\eqref{eqn:defn.forward.transform} represents spectral decomposition into type~I and type~II augmented eigenfunctions.

\begin{thm} \label{thm:Diag.2}
Let $S$ be the spatial differential operator associated with a type~\textnormal{II} IBVP. Then the transform pair $(F_\lambda,f_x)$ provides a spectral representation of $S$ in the sense of definition~\ref{defn:Spect.Rep.I.II}.
\end{thm}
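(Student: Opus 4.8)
The plan is to take $\gamma^{(\mathrm{I})}=\Gamma_0$ with $E^{(\mathrm{I})}=\{F_\lambda:\lambda\in\Gamma_0\}$, and $\gamma^{(\mathrm{II})}=\Gamma_a$ with $E^{(\mathrm{II})}=\{F_\lambda:\lambda\in\Gamma_a\}$, so that $E=E^{(\mathrm{I})}\cup E^{(\mathrm{II})}=\{F_\lambda:\lambda\in\Gamma\}$, and to verify the clauses of Definition~\ref{defn:Spect.Rep.I.II} one by one; the common operator is $S$ and the eigenvalue is $z(\lambda)=\lambda^n$. The starting point is Lemma~\ref{lem:GEInt1}, which for $\phi\in\Phi$ gives $F^+_\lambda(S\phi)=\lambda^n F^+_\lambda(\phi)+P^+_\phi(\lambda)$ on $\Gamma^+_0\cup\Gamma^+_a$ and $F^-_\lambda(S\phi)=\lambda^n F^-_\lambda(\phi)+P^-_\phi(\lambda)e^{-i\lambda}$ on $\Gamma^-_0\cup\Gamma^-_a$, with $P^\pm_\phi$ polynomials of degree at most $n-1$ depending linearly on the boundary values of $\phi$. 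This realises~\eqref{eqn:defnAugEig.AugEig} with a remainder functional $R_\lambda\in\Phi'$ equal to $P^+_\phi(\lambda)$, respectively $P^-_\phi(\lambda)e^{-i\lambda}$; in particular $\lambda\mapsto R_\lambda(\phi)$ is entire.

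Next I would verify the control conditions. For the type~I clause on $\Gamma_0$: since $\Gamma_0=\Gamma^+_0\cup\Gamma^-_0$ is a union of the circles $C(\sigma,\epsilon)$ over the zeros $\sigma$ of $\Delta$, and on each of them $e^{i\lambda x}R_\lambda(\phi)$ is one of the entire functions $e^{i\lambda x}P^+_\phi(\lambda)$ or $e^{i\lambda(x-1)}P^-_\phi(\lambda)$, every circle integral vanishes, giving~\eqref{eqn:defnAugEig.Control1}. For the type~II clause on $\Gamma_a$: observe that $\M{M}{k}{j}(0)=\Msup{b}{j}{0}{\star}+\Msup{\beta}{j}{0}{\star}$ does not depend on $k$, so $\Delta(0)=\det M(0)=0$ and $D(0,2\epsilon)$ is excised from the domains bounded by $\Gamma^\pm_a$; hence for fixed $x\in(0,1)$ the functions $e^{i\lambda x}P^+_\phi(\lambda)/\lambda^n$ (on $\Gamma^+_a$) and $e^{i\lambda(x-1)}P^-_\phi(\lambda)/\lambda^n$ (on $\Gamma^-_a$) are holomorphic off $\lambda=0$, are $O(\lambda^{-1})$ at infinity, and carry an exponential factor decaying in $\C^+$ (as $x>0$), respectively in $\C^-$ (as $x-1<0$). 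Closing $\Gamma^\pm_a$ by arcs at infinity as in the proof of Proposition~\ref{prop:Transforms.valid:General:2}, the arcs vanish by Jordan's lemma and the integrand has no pole in the region bounded by the contour ($0$ being dented out), so $\int_{\Gamma_a}z(\lambda)^{-1}e^{i\lambda x}R_\lambda(\phi)\d\lambda=0$, which is~\eqref{eqn:defnAugEig.Control2}. Thus $E^{(\mathrm{I})}$ and $E^{(\mathrm{II})}$ are type~I and type~II families of augmented eigenfunctions of $S$ up to integration along their respective contours.

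Completeness of $E=\{F_\lambda:\lambda\in\Gamma\}$ is immediate from Proposition~\ref{prop:Transforms.valid:General:2}: if $\phi\in\Phi$ and $F_\lambda(\phi)=0$ for every $\lambda\in\Gamma$, then $\phi(x)=f_x(F_\lambda(\phi))=\int_\Gamma e^{i\lambda x}F_\lambda(\phi)\d\lambda=0$ for $x\in(0,1)$, so $\phi\equiv0$ by continuity. Granting for the moment that $\int_{\Gamma_0}e^{i\lambda x}F_\lambda(S\phi)\d\lambda$ and $\int_{\Gamma_0}e^{i\lambda x}F_\lambda(\phi)\d\lambda$ converge for all $\phi\in\Phi$ and $x\in(0,1)$, the remaining convergence requirement~\eqref{eqn:Spect.Rep.defnI.II.conv2} follows by subtracting the second of these from the identity $\int_\Gamma e^{i\lambda x}F_\lambda(\phi)\d\lambda=\phi(x)$; and the spectral identities~\eqref{eqn:Spect.Rep.II.1} and~\eqref{eqn:Spect.Rep.II.2} follow by integrating~\eqref{eqn:defnAugEig.AugEig} over $\gamma^{(\mathrm{I})}=\Gamma_0$, and its $z(\lambda)^{-1}$-multiple over $\gamma^{(\mathrm{II})}=\Gamma_a$ (legitimate since $0\notin\Gamma_a$), the remainder integrals vanishing by the control conditions above and the term-by-term splitting being justified by those same convergences.

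The main obstacle is therefore the convergence of the residue series
\begin{equation*}
\int_{\Gamma_0}e^{i\lambda x}F_\lambda(S\phi)\d\lambda=2\pi i\sum_{\substack{\sigma\in\C:\\\Delta(\sigma)=0}}\res_{\lambda=\sigma}\left(e^{i\lambda x}F_\lambda(S\phi)\right),\qquad \int_{\Gamma_0}e^{i\lambda x}F_\lambda(\phi)\d\lambda=2\pi i\sum_{\substack{\sigma\in\C:\\\Delta(\sigma)=0}}\res_{\lambda=\sigma}\left(e^{i\lambda x}F_\lambda(\phi)\right),
\end{equation*}
which I expect to handle as the linearized~KdV case is handled in Section~\ref{ssec:Spectral:LKdV}. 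By the asymptotic theory of the exponential polynomial $\Delta$ (Birkhoff; Langer~\cite{Lan1931a}), outside a bounded set its zeros lie in finitely many thin strips about rays, with $\sigma_k$ growing linearly along each ray and $\Delta'(\sigma_k)$ of the order of the dominant exponential there. Expanding $F^\pm_\lambda$ by repeated integration by parts and using the boundary conditions $\phi\in\Phi$ together with the cofactor identity $\sum_l\det\Msups{X}{}{}{l}{j}(\lambda)\M{M}{l}{k}(\lambda)=\Delta(\lambda)\M{\delta}{j}{k}$ should force successive cancellations of these exponentials against $\Delta'(\sigma_k)$ and, crucially, absorb the extra factor $\lambda^n$ introduced by $S$, leaving each residue equal to $O(k^{-1})$ times a function of $x$ bounded on compact subsets of $(0,1)$, up to exponentially small corrections; the Riemann--Lebesgue lemma then yields conditional convergence, uniformly on such compacta (the series for $F_\lambda(\phi)$ being in fact absolutely convergent). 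The one feature not present in Section~\ref{ssec:Spectral:LKdV} is that, for a type~II operator, $\Delta$ may have infinitely many zeros inside the sectors bounded by $\Gamma_a$; but each such zero is encircled once by $\Gamma_0$ and, with the opposite orientation, dented out of $\Gamma_a$, so its residue is exactly the one that the corresponding dent removes from $\int_{\Gamma_a}e^{i\lambda x}F_\lambda(\phi)\d\lambda$, and the same asymptotic estimate applies to it unchanged. The careful bookkeeping of these exponential cancellations is where the real work lies.
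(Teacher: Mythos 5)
Your overall architecture coincides with the paper's: the same splitting $E^{(\mathrm{I})}=\{F_\lambda:\lambda\in\Gamma_0\}$, $E^{(\mathrm{II})}=\{F_\lambda:\lambda\in\Gamma_a\}$ with eigenvalue $z(\lambda)=\lambda^n$, the remainder structure taken from Lemma~\ref{lem:GEInt1}, the control condition~\eqref{eqn:defnAugEig.Control1} on $\Gamma_0$ via Cauchy's theorem and~\eqref{eqn:defnAugEig.Control2} on $\Gamma_a$ via Jordan's lemma (the paper packages this as Lemma~\ref{lem:GEInt2} and Proposition~\ref{prop:GEIntComplete3}; your direct version, including the correct observation that $\Delta(0)=0$ so that $D(0,2\epsilon)$ is excised from the sectors bounded by $\Gamma^\pm_a$, is equivalent), completeness deduced from an inversion theorem (you quote Proposition~\ref{prop:Transforms.valid:General:2} directly, the paper evaluates the solution formula of Proposition~\ref{prop:Transform.Method:General:2} at $t=0$; either suffices), and the identities~\eqref{eqn:Spect.Rep.II.1}--\eqref{eqn:Spect.Rep.II.2} obtained by integrating the eigenvalue relation over the two contours.

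The genuine gap is the convergence statement~\eqref{eqn:Spect.Rep.defnI.II.conv1}, i.e.\ convergence of the residue series for $\int_{\Gamma_0}e^{i\lambda x}F_\lambda(S\phi)\d\lambda$ (and of $\int_{\Gamma_0}e^{i\lambda x}F_\lambda(\phi)\d\lambda$, which you also need for your subtraction argument giving~\eqref{eqn:Spect.Rep.defnI.II.conv2}). This is the analytic heart of the theorem and you explicitly defer it (``the careful bookkeeping \dots is where the real work lies''). Your plan --- redo the Section~\ref{ssec:Spectral:LKdV} computation for general $n$ and general boundary coefficients via Birkhoff--Langer asymptotics, expecting cancellations to absorb the factor $\lambda^n$ and leave $O(k^{-1})$ residues --- is not carried out, and it is not something that follows from the asymptotics of $\Delta$ alone: the decay of $\res_{\lambda=\sigma}\bigl(e^{i\lambda x}F_\lambda(S\phi)\bigr)$ depends on the size of $\Delta'(\sigma)$ relative to the exponentials appearing in the cofactors $\det\Msups{X}{}{}{l}{j}$, and for degenerate boundary conditions this ratio can grow, so the series need not converge. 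This is precisely where the hypothesis that $S$ is type~II (well-posed) must enter, and your sketch never invokes it at this step. The paper avoids the bookkeeping altogether: it imports from~\cite{Smi2013a} that convergence of exactly these residue series is a necessary consequence of well-posedness, combines this with the estimate $\res_{\lambda=\sigma}F^\pm_\lambda(f)=O(|\sigma|^{-j-1})$ for $f\in\Phi$ to get $\res_{\lambda=\sigma}F_\lambda(S\phi)=O(|\sigma|^{-1})$, and concludes conditional convergence by Riemann--Lebesgue. A minor related point: your parenthetical claim that the series for $F_\lambda(\phi)$ converges absolutely is unwarranted --- even in the LKdV example of Section~\ref{ssec:Spectral:LKdV} the residues are only $O(k^{-1})$, so that convergence is conditional as well.
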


The principal tools for constructing families of augmented eigenfunctions are lemma~\ref{lem:GEInt1}, as well as the following lemma:

\begin{lem} \label{lem:GEInt2}
Let $F^\pm_\lambda$ be the functionals defined in equations~\eqref{eqn:defn.Fpm.rho}.
\begin{enumerate}
  \item[(i)]{Let $\gamma$ be any simple closed contour. Then $\{F^\pm_\lambda:\lambda\in\gamma\}$ are families of type~\textup{I} augmented eigenfunctions of $S$ up to integration along $\gamma$ with eigenvalues $\lambda^n$.}
  \item[(ii)]{Let $\gamma$ be any simple closed contour which neither passes through nor encloses $0$. Then $\{F^\pm_\lambda:\lambda\in\gamma\}$ are families of type~\textup{II} augmented eigenfunctions of $S$ up to integration along $\gamma$ with eigenvalues $\lambda^n$.}
  \item[(iii)]{Let $0\leq\theta<\theta'\leq\pi$ and define $\gamma^+$ to be the boundary of the open set
  \BE
  \{\lambda\in\C:|\lambda|>\epsilon, \, \theta<\arg\lambda<\theta'\};
  \EE
  similarly, $\gamma^-$ is the boundary of the open set
  \BE
  \{\lambda\in\C:|\lambda|>\epsilon, \, -\theta'<\arg\lambda<-\theta\}.
  \EE
  Both $\gamma^+$ and $\gamma^-$ have positive orientation. Then $\{F^\pm_\lambda:\lambda\in\gamma^\pm\}$ are families of type~\textup{II} augmented eigenfunctions of $S$ up to integration along $\gamma^\pm$ with eigenvalues $\lambda^n$.}
\end{enumerate}
\end{lem}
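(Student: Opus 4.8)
The plan is to read off from Lemma~\ref{lem:GEInt1} that the functionals $F^\pm_\lambda$ already satisfy the defining relation~\eqref{eqn:defnAugEig.AugEig} with $L=S$, eigenvalue $z(\lambda)=\lambda^n$, and remainder functionals $R_\lambda(\phi)=P^+_\phi(\lambda)$ in the $F^+$ case and $R_\lambda(\phi)=e^{-i\lambda}P^-_\phi(\lambda)$ in the $F^-$ case, where $P^\pm_\phi$ are polynomials of degree at most $n-1$. All that then remains is to verify, for each of the three families of contours, the relevant integral vanishing condition:~\eqref{eqn:defnAugEig.Control1} for part~(i) and~\eqref{eqn:defnAugEig.Control2} for parts~(ii) and~(iii). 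Since for fixed $\phi$ the remainders are polynomial multiples of entire functions --- in particular they carry none of the poles of $F^\pm_\lambda$ at the zeros of $\Delta$ --- these conditions reduce to contour integrals of functions that are either entire or meromorphic with their only pole at the origin, so the whole lemma is a matter of Cauchy's theorem plus one Jordan-lemma estimate.

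For part~(i), fix $\phi\in\Phi$; the integrand in~\eqref{eqn:defnAugEig.Control1} is $e^{i\lambda x}P^+_\phi(\lambda)$ for $F^+$ and $e^{i\lambda(x-1)}P^-_\phi(\lambda)$ for $F^-$, and in either case it is entire in $\lambda$, so Cauchy's theorem makes its integral around the simple closed contour $\gamma$ vanish identically in $x$. For part~(ii) the integrand of~\eqref{eqn:defnAugEig.Control2} is $\lambda^{-n}e^{i\lambda x}P^+_\phi(\lambda)$, respectively $\lambda^{-n}e^{i\lambda(x-1)}P^-_\phi(\lambda)$, a meromorphic function whose only singularity is a pole at $\lambda=0$; since $\gamma$ neither meets nor encircles $0$, this function is holomorphic on and inside $\gamma$ and Cauchy's theorem again gives a vanishing integral. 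In both parts the conclusion in fact holds for every $x\in\R$, the range of $x$ stated being merely what the definitions of type~I and type~II augmented eigenfunctions demand.

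Part~(iii) is the substantive case. For $F^+$ on $\gamma^+$ the integrand of~\eqref{eqn:defnAugEig.Control2} is $g(\lambda)=\lambda^{-n}e^{i\lambda x}P^+_\phi(\lambda)$, and for $F^-$ on $\gamma^-$ it is $g(\lambda)=\lambda^{-n}e^{i\lambda(x-1)}P^-_\phi(\lambda)$ --- the correlated pairing $F^\pm\leftrightarrow\gamma^\pm$ is the one that works, precisely because $g$ must decay in the closed half-plane containing its contour. In both cases $g$ is holomorphic on the closed sectorial region bounded by $\gamma^\pm$, the origin having been excised, and, because $\deg P^\pm_\phi\leq n-1$, one has $g(\lambda)=O(|\lambda|^{-1})$ as $\lambda\to\infty$. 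Along $\gamma^+$ one has $\Im\lambda\geq0$, hence $|e^{i\lambda x}|=e^{-x\Im\lambda}\leq1$ for $x\in(0,1)$, with genuine decay as $\Im\lambda\to\infty$; along $\gamma^-$ one has $\Im\lambda\leq0$, hence $|e^{i\lambda(x-1)}|=e^{(1-x)\Im\lambda}\leq1$ since $1-x>0$. I would then truncate $\gamma^\pm$ at radius $R$, close the truncated contour by the circular arc of radius $R$ lying inside the sector, apply Cauchy's theorem on the resulting bounded annular sector (on which $g$ is holomorphic) to equate the truncated contour integral with minus the arc integral, and let $R\to\infty$. The arc integral tends to $0$: parametrising the arc by $\lambda=Re^{i\psi}$ with $|\psi|$ between $\theta$ and $\theta'$, the bound $|g(\lambda)|\leq CR^{-1}e^{-xR\sin|\psi|}$ (and the analogous bound with $1-x$ for $F^-$ on $\gamma^-$) integrates to $O(R^{-1})$ when the sector avoids the real axis, and still to $O(R^{-1})$ when $\theta=0$ or $\theta'=\pi$ on invoking $\sin|\psi|\geq2|\psi|/\pi$ on $[0,\pi/2]$. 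This yields $\int_{\gamma^\pm}g\,\d\lambda=0$, establishing~\eqref{eqn:defnAugEig.Control2} and hence the type~II property.

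I expect the main obstacle to be exactly that last estimate: when $\theta=0$ or $\theta'=\pi$ the contour $\gamma^\pm$ runs along part of the real axis, where the exponential factor supplies no decay, so the integral in~\eqref{eqn:defnAugEig.Control2} is only conditionally convergent and the closing arcs must be handled by a Jordan-type bound rather than a crude one. It is precisely here that $x>0$ (for $F^+$) and $x<1$ (for $F^-$) are used, which is why part~(iii), unlike the bounded-contour parts~(i) and~(ii), genuinely requires $x\in(0,1)$ and yields augmented eigenfunctions of type~II rather than of type~I.
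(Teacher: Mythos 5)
Your proposal is correct and follows essentially the same route as the paper: identify the remainders $P^+_\phi(\lambda)$ and $e^{-i\lambda}P^-_\phi(\lambda)$ from Lemma~\ref{lem:GEInt1}, dispose of (i) and (ii) by Cauchy's theorem (the remainders being entire, with only a pole at $0$ after division by $\lambda^n$), and handle (iii) by Jordan's lemma using the decay $O(|\lambda|^{-1})$ and the boundedness of $e^{i\lambda x}$ in $\overline{\C^+}$, respectively $e^{i\lambda(x-1)}$ in $\C^-$, for $x\in(0,1)$. Your truncation-and-arc estimate, the explicit treatment of the edge cases $\theta=0$, $\theta'=\pi$, and the worked-out $\gamma^-$ case merely flesh out what the paper compresses into ``by Jordan's Lemma'' and ``the proof is similar''.
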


\begin{proof} ~ 
\begin{enumerate}
  \item[(i)]{\& (ii) By lemma~\ref{lem:GEInt1}, the remainder functionals are analytic in $\lambda$ within the region bounded by $\gamma$. Cauchy's theorem yields the result.}
  \item[(iii)]{Let $G$ be the union of the set lying to the left of $\gamma^+$ with the set of points lying on $\gamma^+$. Then $G$ lies in the closed upper half-plane. By lemma~\ref{lem:GEInt1},
  \BE
  \int_{\gamma^+} e^{i\lambda x}\lambda^{-n}(F_\lambda^+(Sf) - \lambda^nF_\lambda^+(f))\d\lambda = \int_{\gamma^+} e^{i\lambda x}\lambda^{-n}P^+_f(\lambda)\d\lambda,
  \EE
  and the integrand is the product of $e^{i\lambda x}$ with a function analytic on $G$ and decaying as $\lambda\to\infty$ from within $G$. Hence, by Jordan's lemma, the integral of the remainder functionals vanishes for all $x>0$. For $\gamma^-$, the proof is similar.}
\end{enumerate}
\end{proof}

The contours arising for the operators we consider are each of one of the forms described in lemma~\ref{lem:GEInt2}. Moreover, the augmented eigenfunctions with $\lambda$ on these contours are of a specific type, depending upon the behaviour of the contour. Indeed, the circular contours correspond to augmented eigenfunctions of type~I whereas the infinite contours correspond to augmented eigenfunctions of type~II.

If a circular contour is away from $0$ then the augmented eigenfunctions corresponding to that contour are also of type~II, as stated in lemma~\ref{lem:GEInt2}(ii). There is an analogous result for the infinite contours: if a contour $\gamma^+$, as described in lemma~\ref{lem:GEInt2}(iii), has neither infinite component lying on $\R$, that is $0<\theta<\theta'<\pi$, then the corresponding augmented eigenfunctions are also of type~I. However, for any operator $S$, there is always at least one contour with an infinite component lying on $\R$. For type~II IBVP, it is impossible to deform this contour away from $\R$ and the augmented eigenfunctions would not form a complete system without including those with $\lambda$ on this contour. If $\gamma$ has an infinite component along $\R$ and $R_\lambda$ is the polynomial $P_f^\pm$ remainder functional described in lemma~\ref{lem:GEInt1}, then the integral~\eqref{eqn:defnAugEig.Control1} will diverge. However, the division by $\lambda^n$ is sufficient to obtain convergence of the integral~\eqref{eqn:defnAugEig.Control2}.

\medskip

Let $(S,a)$ be such that the associated IBVP is well-posed. Then there exists a complete system of augmented eigenfunctions associated with $S$, some of which are type~\textup{I} whereas the rest are type~\textup{II}. Indeed:

\begin{prop} \label{prop:GEIntComplete3}
The system
\BE
\mathcal{F}_0 = \{F^+_\lambda:\lambda\in\Gamma^+_0\}\cup\{F^-_\lambda:\lambda\in\Gamma^-_0\}
\EE
is a family of type~\textup{I}  augmented eigenfunctions of $S$ up to integration over $\Gamma_0$, with eigenvalues $\lambda^n$. The system
\BE
\mathcal{F}_a = \{F^+_\lambda:\lambda\in\Gamma^+_a\}\cup\{F^-_\lambda:\lambda\in\Gamma^-_a\}
\EE
is a family of type~\textup{II} augmented eigenfunctions of $S$ up to integration over $\Gamma_a$, with eigenvalues $\lambda^n$.

Furthermore, if an IBVP associated with $S$ is well-posed, then $\mathcal{F}=\mathcal{F}_0\cup \mathcal{F}_a$ is a complete system.
\end{prop}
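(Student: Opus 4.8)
The plan is to verify the three assertions in turn, reducing the first two to Lemmas~\ref{lem:GEInt1} and~\ref{lem:GEInt2} together with the closure of augmented-eigenfunction families (of a fixed operator) under union, and deducing the third from the validity of the transform pair, Proposition~\ref{prop:Transforms.valid:General:2}. First I would treat $\mathcal{F}_0$. By Lemma~\ref{lem:GEInt1}, for every $\phi\in\Phi$ one has $F^+_\lambda(S\phi)=\lambda^n F^+_\lambda(\phi)+P^+_\phi(\lambda)$ and $F^-_\lambda(S\phi)=\lambda^n F^-_\lambda(\phi)+P^-_\phi(\lambda)e^{-i\lambda}$, so~\eqref{eqn:defnAugEig.AugEig} holds for $\mathcal{F}_0$ with eigenvalue $z(\lambda)=\lambda^n$ and remainder functional $R_\lambda(\phi)=P^+_\phi(\lambda)$ on $\Gamma^+_0$ and $R_\lambda(\phi)=P^-_\phi(\lambda)e^{-i\lambda}$ on $\Gamma^-_0$. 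Since $\Gamma^+_0$ is the disjoint union of the circles $C(\sigma,\epsilon)$ over the zeros $\sigma\in\overline{\C^+}$ of $\Delta$ and $e^{i\lambda x}P^+_\phi(\lambda)$ is entire, Cauchy's theorem gives $\int_{C(\sigma,\epsilon)}e^{i\lambda x}P^+_\phi(\lambda)\d\lambda=0$; likewise $\int_{C(\sigma,\epsilon)}e^{i\lambda x}P^-_\phi(\lambda)e^{-i\lambda}\d\lambda=0$ for $\sigma\in\C^-$. Summing over all such $\sigma$, each term being zero, yields $\int_{\Gamma_0}e^{i\lambda x}R_\lambda(\phi)\d\lambda=0$ for all $\phi\in\Phi$ and $x\in[0,1]$, which is~\eqref{eqn:defnAugEig.Control1}; this is precisely Lemma~\ref{lem:GEInt2}(i) applied on each component, so $\mathcal{F}_0$ is a family of type~I augmented eigenfunctions up to integration over $\Gamma_0$ with eigenvalues $\lambda^n$.

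Next I turn to $\mathcal{F}_a$, which satisfies the same identities, so only~\eqref{eqn:defnAugEig.Control2} requires attention. The contour $\Gamma^+_a$ is the positively oriented boundary of the domain $D^+=\{\lambda\in\C^+:\Re(a\lambda^n)>0\}\setminus\bigcup_\sigma D(\sigma,2\epsilon)$, whose closure lies in $\overline{\C^+}$ and avoids $0$ (which, for $n\geq2$, is a zero of $\Delta$ and hence interior to the removed disk $D(0,2\epsilon)$). On $D^+$ the function $e^{i\lambda x}\lambda^{-n}P^+_\phi(\lambda)$ is analytic, is $O(\lambda^{-1})$ as $\lambda\to\infty$, and obeys $|e^{i\lambda x}|\leq1$ for $x\geq0$; closing $\Gamma^+_a$ with circular arcs of radius $R\to\infty$, whose contribution vanishes by Jordan's lemma when $x>0$, Cauchy's theorem gives $\int_{\Gamma^+_a}e^{i\lambda x}\lambda^{-n}P^+_\phi(\lambda)\d\lambda=0$. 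The analogous computation on $\Gamma^-_a$, where the integrand is $e^{i\lambda(x-1)}\lambda^{-n}P^-_\phi(\lambda)$ and $|e^{i\lambda(x-1)}|\leq1$ in $\overline{\C^-}$ for $x\leq1$, gives $0$ when $x<1$. Adding, $\int_{\Gamma_a}z(\lambda)^{-1}e^{i\lambda x}R_\lambda(\phi)\d\lambda=0$ for all $\phi\in\Phi$ and $x\in(0,1)$, i.e.~\eqref{eqn:defnAugEig.Control2}; equivalently one decomposes $\Gamma_a$ into its unbounded sector-boundary rays, handled by Lemma~\ref{lem:GEInt2}(iii), and the (possibly infinitely many) circular components enclosing zeros interior to the sectors, handled by Lemma~\ref{lem:GEInt2}(ii) because those circles do not enclose $0$. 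Hence $\mathcal{F}_a$ is a family of type~II augmented eigenfunctions up to integration over $\Gamma_a$ with eigenvalues $\lambda^n$.

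For the completeness claim I would argue as follows: since an initial-boundary value problem for $S$ is well-posed, $S$ is of type~I or type~II, so Proposition~\ref{prop:Transforms.valid:General:2} applies and gives, for every $\phi\in\Phi$ and $x\in(0,1)$,
\BES
\phi(x) = \left\{\int_{\Gamma^+_0}+\int_{\Gamma^+_a}\right\}e^{i\lambda x}F^+_\lambda(\phi)\d\lambda + \left\{\int_{\Gamma^-_0}+\int_{\Gamma^-_a}\right\}e^{i\lambda x}F^-_\lambda(\phi)\d\lambda.
\EES
If $E_\lambda\phi=0$ for all $\lambda\in\Gamma$, then by the definition of $F_\lambda$ we have $F^+_\lambda(\phi)=0$ on $\Gamma^+_0\cup\Gamma^+_a$ and $F^-_\lambda(\phi)=0$ on $\Gamma^-_0\cup\Gamma^-_a$, so every integral on the right vanishes and $\phi(x)=0$ for all $x\in(0,1)$; since $\phi\in C^\infty[0,1]$, continuity forces $\phi\equiv0$, and therefore $\mathcal{F}$ is complete. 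I expect the only genuine difficulty to lie in the $\mathcal{F}_a$ step: one must confirm that the control integral over the unbounded contour $\Gamma_a$ converges absolutely --- it does, because $e^{i\lambda x}$ decays exponentially along the rays of $\Gamma_a$ (on which $\Im\lambda>0$) while $\lambda^{-n}P^\pm_\phi(\lambda)=O(\lambda^{-1})$ --- and that $\Gamma_a$ neither meets nor encloses $0$, which is exactly what allows the weight $z(\lambda)^{-1}=\lambda^{-n}$ in~\eqref{eqn:defnAugEig.Control2} to be carried through Cauchy's theorem.
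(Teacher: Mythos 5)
Your argument is correct, and for the two augmented-eigenfunction claims it is essentially the paper's: the identities of Lemma~\ref{lem:GEInt1} give the eigenvalue relation with entire remainders $P^+_\phi(\lambda)$ and $P^-_\phi(\lambda)e^{-i\lambda}$, the circles making up $\Gamma_0$ are handled by Cauchy's theorem (Lemma~\ref{lem:GEInt2}(i)), and $\Gamma_a$ is handled either by your direct truncate--Cauchy--Jordan argument on the sector domains or, equivalently, by the paper's decomposition into the contours of Lemma~\ref{lem:GEInt2}(ii) and (iii); your observation that $0$ is a zero of $\Delta$ for $n\geq2$, so that $\Gamma_a$ neither meets nor encloses the pole of the weight $\lambda^{-n}$, is exactly the point that makes the weighted control condition~\eqref{eqn:defnAugEig.Control2} go through. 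Where you genuinely diverge is the completeness step: you apply the validity of the transform pair, Proposition~\ref{prop:Transforms.valid:General:2}, directly to $f\in\Phi$, so that vanishing of $F^\pm_\lambda(f)$ on all of $\Gamma=\Gamma_0\cup\Gamma_a$ kills every term and forces $f\equiv0$; the paper instead regards $f$ as the initial datum of the well-posed IBVP, invokes Proposition~\ref{prop:Transform.Method:General:2} and evaluates the solution at $t=0$, obtaining a representation of $f$ by integrals over $\Gamma_0$ alone and hence the formally stronger conclusion that $\mathcal{F}_0$ by itself is complete (so that $\mathcal{F}_a$ only supplements it). Your route proves exactly what the proposition asserts with fewer moving parts (well-posedness enters only through the decay estimates underlying Proposition~\ref{prop:Transforms.valid:General:2}); the paper's route buys the sharper completeness statement for $\mathcal{F}_0$, at the price of the extra step of discarding the $\Gamma_a$ contributions in the solution formula at $t=0$.

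One small caveat: your closing claim that the weighted control integral over $\Gamma_a$ converges \emph{absolutely} because $e^{i\lambda x}$ decays exponentially along its rays is not right in general --- components of $\Gamma_a$ typically lie on the real axis (the paper's remark after Lemma~\ref{lem:GEInt2} points this out), where there is no exponential decay and the $O(\lambda^{-1})$ integrand gives only conditional convergence. This does not damage your proof, since the vanishing is obtained from Cauchy's theorem on truncated contours plus Jordan's lemma, exactly as in Lemma~\ref{lem:GEInt2}(iii), and that argument needs only the improper (conditional) limit; but the aside should be corrected.
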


\begin{proof}
Considering $f\in\Phi$ as the initial datum of the homogeneous IBVP and applying proposition~\ref{prop:Transform.Method:General:2}, we evaluate the solution of problem~\eqref{eqn:IBVP} at $t=0$,
\BE
f(x) = q(x,0) = \int_{\Gamma_0^+} e^{i\lambda x} F^+_\lambda(f) \d\lambda + \int_{\Gamma_0^-} e^{i\lambda x} F^-_\lambda(f) \d\lambda.
\EE
Thus, if $F^\pm_\lambda(f)=0$ for all $\lambda\in\Gamma_0$ then $f=0$.

By lemma~\ref{lem:GEInt2}~(i), $\mathcal{F}_0$ is a system of type~I augmented eigenfunctions up to integration along $\Gamma^+_0\cup\Gamma^-_0$.

Applying lemma~\ref{lem:GEInt1} to $\mathcal{F}_a$, we obtain
\BE
F^\pm_\lambda(Sf) = \lambda^n F^\pm_\lambda(f) + R^\pm_\lambda(f),
\EE
with
\BE
R^+_\lambda(f) = P^+_f(\lambda), \qquad R^-_\lambda(f) = P^-_f(\lambda)e^{-i\lambda}.
\EE
By lemma~\ref{lem:GEInt2}~(ii), we can deform the contours $\Gamma^\pm_a$ onto the union of several contours of the form of the $\gamma^\pm$ appearing in lemma~\ref{lem:GEInt2}~(iii). The latter result completes the proof.
\end{proof}

\begin{proof}[Proof of theorem~\ref{thm:Diag.2}]
Proposition~\ref{prop:GEIntComplete3} establishes completeness of the augmented eigenfunctions and equations~\eqref{eqn:Spect.Rep.II}, under the assumption that the integrals converge. The series of residues
\BE
\int_{\Gamma_0} e^{i\lambda x} F^\pm_\lambda(Sf) \d\lambda = 2\pi i\sum_{\substack{\sigma\in\C:\\\Delta(\sigma)=0}}e^{i\sigma x}\res_{\lambda=\sigma}F^\pm_\lambda(Sf),
\EE
whose convergence is guaranteed by the well-posedness of the IBVP~\cite{Smi2013a}. Indeed, a necessary condition for well-posedness is the convergence of this series for $Sf\in\Phi$. But then the definition of $F^\pm_\lambda$ implies
\BES
\res_{\lambda=\sigma}F^\pm_\lambda(f) = O(|\sigma|^{-j-1}), \mbox{ where } j=\max\{k:\hsforall f\in\Phi,\; f^{(k)}(0)=f^{(k)}(1)=0\},
\EES
so $\res_{\lambda=\sigma} F_\lambda(Sf) = O(|\sigma|^{-1})$ and the Riemann-Lebesgue lemma gives convergence. This verifies statement~\eqref{eqn:Spect.Rep.defnI.II.conv1}. Theorem~\ref{prop:Transforms.valid:General:2} ensures convergence of the right hand side of equation~\eqref{eqn:Spect.Rep.II.2}. Hence statement~\eqref{eqn:Spect.Rep.defnI.II.conv2} holds.
\end{proof}

\begin{rmk}
Suppose $S$ is a type~I operator.

By the definition of a type~I operator (more precisely, by the properties of an associated type~I IBVP, see~\cite{Smi2013a}), $F^\pm_\lambda(\phi)=O(\lambda^{-1})$ as $\lambda\to\infty$ within the sectors interior to $\Gamma^\pm_a$. Hence, by Jordan's lemma,
\BE
\int_{\Gamma^+_a}e^{i\lambda x} F^+_\lambda(\phi)\d\lambda + \int_{\Gamma^-_a}e^{i\lambda x} F^-_\lambda(\phi)\d\lambda = 0.
\EE
Hence, it is possible to define an alternative inverse transform
\begin{align} \label{eqn:defn.inverse.transform.typeI}
F(\lambda) &\mapsto f(x): & f^\Sigma_x(F) &= \int_{\Gamma_0} e^{i\lambda x} F(\lambda)\d\lambda,
\end{align}
equivalent to $f_x$. The new transform pair $(F_\lambda,f^\Sigma_x)$ defined by equations~\eqref{eqn:defn.forward.transform} and~\eqref{eqn:defn.inverse.transform.typeI} may be used to solve an IBVP associated with $S$ so 
\BE
\mathcal{F}_0 = \{F^+_\lambda:\lambda\in\Gamma^+_0\}\cup\{F^-_\lambda:\lambda\in\Gamma^-_0\}
\EE
is a complete system of functionals on $\Phi$.

Moreover, $\mathcal{F}_0$ is a family of type~I augmented eigenfunctions \emph{only}. Hence, $\mathcal{F}_0$ provides a spectral representation of $S$ in the sense of definition~\ref{defn:Spect.Rep.I}. Via a residue calculation at each zero of $\Delta$, one obtains a classical spectral representation of $S$ as a series of (generalised) eigenfunctions.

We emphasize that this spectral representation without type~II augmented eigenfunctions is only possible for a type~I operator.
\end{rmk}

\begin{rmk}
By definition, the point $3\epsilon/2$ is always exterior to the set enclosed by $\Gamma$. Therefore introducing a pole at $3\epsilon/2$ does not affect the convergence of the contour integral along $\Gamma$. This means that the system $\mathcal{F}'=\{(\lambda-3\epsilon/2)^{-n}F_\lambda:\lambda\in\Gamma\}$ is a family of type~\textup{I} augmented eigenfunctions, thus no type~\textup{II} augmented eigenfunctions are required; equation~\eqref{eqn:Spect.Rep.I} holds for $\mathcal{F}'$ and the integrals converge. However, we cannot show that $\mathcal{F}'$ is complete, so we do not have a spectral representation of $S$ through the system $\mathcal{F}'$.
\end{rmk}

\begin{rmk} \label{rmk:Splitting.of.rep}
There may be infinitely many circular components of $\Gamma_a$, each corresponding to a zero of $\Delta$ which lies in the interior of a sector enclosed by the main component of $\Gamma_a$. It is clear that in equations~\eqref{eqn:Transforms.valid:General:prop2} and~\eqref{eqn:Transform.Method:General:prop2:1}, representing the validity of the transform pair and the solution of the IBVP, the contributions of the integrals around these circular contours are cancelled by the contributions of the integrals around certain components of $\Gamma_0$, as shown in figure~\ref{fig:general-contdef}. Hence, we could redefine the contours $\Gamma_a$ and $\Gamma_0$ to exclude these circular components without affecting the validity of propositions~\ref{prop:Transforms.valid:General:2} and~\ref{prop:Transform.Method:General:2}.

Our choice of $\Gamma_a$ is intended to reinforce the notion that $S$ is split into two parts by the augmented eigenfunctions. In $\Gamma_0$, we have chosen a contour which encloses each zero of the characterstic determinant individually, since each of these zeros is a classical eigenvalue, so $\mathcal{F}_0$ corresponds to the set of all generalised eigenfunctions. Hence $\mathcal{F}_a$ corresponds only to the additional spectral objects necessary to form a complete system.
\end{rmk}

\begin{rmk} \label{rmk:Gamma.a.ef.at.inf}
As $\Gamma_a$ encloses no zeros of $\Delta$, we could choose any $R>0$ and redefine $\Gamma^\pm_{a\Mspacer R}$ as the boundary of
\BE
\left\{\lambda\in\C^\pm:|\lambda|>R,\;\Re(a\lambda^n)>0\right\} \setminus \bigcup_{\substack{\sigma\in\C:\\\Delta(\sigma)=0}}D(\sigma,2\epsilon),
\EE
deforming $\Gamma_a$ over a finite region. By considering the limit $R\to\infty$, we claim that $\mathcal{F}_a$ can be seen to represent \emph{spectral objects with eigenvalue at infinity}.
\end{rmk}

\begin{rmk} \label{rmk:diag.type2only.General}
By lemma~\ref{lem:GEInt2}(ii), for all $\sigma\neq0$ such that $\Delta(\sigma)=0$, it holds that $\{F^\pm_\lambda:\lambda\in C(\sigma,\epsilon)\}$ are families of type~II augmented eigenfunctions. Hence, the only component of $\Gamma_0$ that may not be a family of type~II augmented eigenfunctions is $C(0,\epsilon)$. If
\begin{subequations}
\begin{align}
\gamma_a^+ &= \Gamma_a^+ \setminus \displaystyle\bigcup_{\substack{\sigma\in\overline{\C^+}:\\\sigma\neq0,\\\Delta(\sigma)=0}}C(\sigma,\epsilon), \\
\gamma_a^- &= \Gamma_a^- \setminus \displaystyle\bigcup_{\substack{\sigma\in\C^-:\\\Delta(\sigma)=0}}C(\sigma,\epsilon), \\
\gamma_0 &= C(0,\epsilon),
\end{align}
\end{subequations}
then
\BE
\mathcal{F}'_a = \{F^+_\lambda:\lambda\in\gamma_a^+\}\cup\{F^-_\lambda:\lambda\in\gamma_a^-\}
\EE
is a family of type~II augmented eigenfunctions and
\BE
\mathcal{F}'_0 = \{F^+_\lambda:\lambda\in\gamma_0\}
\EE
is a family of type~I augmented eigenfunctions of $S$. For $S$ type~I or type~II, $\mathcal{F}'_a\cup\mathcal{F}'_0$ provides a spectral representation of $S$ in the sense of definition~\ref{defn:Spect.Rep.I.II}, with minimal type~I augmented eigenfunctions.

Assume that $0$ is a removable singularity of $F^+_\lambda$. Then $\mathcal{F}'_a$ provides a spectral representation of $S$ in the sense of definition~\ref{defn:Spect.Rep.I.II} with $E^{(\mathrm{I})}=\emptyset$. We have already identified the operators $S^{(\mathrm{I})}$ and $S^{(\mathrm{II})}$ for which this representation is possible (see Remark~\ref{rmk:diag.type2only.LKdV}).
\end{rmk}

\begin{rmk} \label{rmk:Ill-posed}
The validity of lemmata~\ref{lem:GEInt1} and~\ref{lem:GEInt2} does not depend upon the class to which $S$ belongs. Hence, even if all IBVP associated with $S$ are ill-posed, it is still possible to construct families of augmented eigenfunctions of $S$. However, without the well-posedness of an associated IBVP, an alternative method is required in order to analyse the completeness of these families. Without completeness results, it is impossible to discuss diagonalisation of the operator by augmented eigenfunctions.
\end{rmk}

\begin{rmk} \label{rmk:Lower-Order-Terms}
It is expected that the above results could be carried over to IBVP for PDE with lower order terms, at least where the corresponding spatial differential operator remains formally self-adjoint. In this case, the eigenvalue is not simply $\lambda^n$ but $z(\lambda)$, for $z$ the symbol of the spatial differential operator.
\end{rmk}

\section{Conclusion}

In the classical separation of variables, 
one makes a particular assumption on the form of the solution. For evolution PDEs in one dimension, this is usually expressed as
\begin{quote}
``Assume the solution takes the form $q(x,t)=\tau(t)\xi(x)$ for all $(x,t)\in[0,1]\times[0,T]$ for some $\xi\in C^\infty[0,1]$ and $\tau\in C^\infty[0,T]$.''
\end{quote}
However, when applying the boundary conditions, one superimposes infinitely many such solutions. So it would be more accurate to use the assumption
\begin{quote}
``Assume the solution takes the form $q(x,t)=\sum_{m\in\N}\tau_m(t)\xi_m(x)$ for some sequences of functions $\xi_m\in C^\infty[0,1]$ which are eigenfunctions of the spatial differential operator, and $\tau_m\in C^\infty[0,T]$; assume that the series converges uniformly for $(x,t)\in[0,1]\times[0,T]$.''
\end{quote}
For this `separation of variables' scheme to yield a result, we require completeness of the eigenfunctions $(\xi_m)_{m\in\N}$ in the space of admissible initial data, and convergence of the series.

The concept of generalized eigenfunctions, as presented by Gelfand and coauthors~\cite{GS1967a,GV1964a} allows one to weaken the above assumption in two ways: first, it allows the index set to be uncountable, hence the series is replaced by an integral. Second, certain additional spectral functions, which are not genuine eigenfunctions, are admitted to be part of the series.

An integral expansion in generalized eigenfunctions is insufficient to describe the solutions of IBVP obtained via the unified transform method for type~II problems. In order to describe these IBVP, we have introduced type~II augmented eigenfunctions. Using these new eigenfunctions, the assumption is weakened further:
\begin{quote}
``Assume the solution takes the form $q(x,t)=\int_{m\in\Gamma}\tau_m(t)\xi_m(x)\d m$ for some functions $\xi_m\in C^\infty[0,1]$, which are type~I and~II augmented eigenfunctions of the spatial differential operator, and $\tau_m\in C^\infty[0,T]$; assume that the integral converges uniformly for $(x,t)\in[0,1]\times[0,T]$.''
\end{quote}

It appears that it is \emph{not} possible to weaken the above assumption any further. Indeed, it has been established in~\cite{FP2001a} that the unified method provides the solution of \emph{all} well-posed problems. Combining propositions~\ref{prop:Transform.Method:General:2} and~\ref{prop:GEIntComplete3}, we can replace the above assumption with the following theorem:
\begin{quote}
``Suppose $q(x,t)$ is the $C^\infty$ solution of a well-posed two-point linear constant-coefficient IBVP. Then $q(x,t)=\int_{m\in\Gamma}\tau_m(t)\xi_m(x)\d m$, where $\xi_m\in C^\infty[0,1]$ are type~I and~II augmented eigenfunctions of the spatial differential operator and $\tau_m\in C^\infty[0,T]$ are some coefficient functions. The integral converges uniformly for $(x,t)\in[0,1]\times[0,T]$.''
\end{quote}

In summary, both type~I and type~II IBVP admit integral representations like~\eqref{eqn:introIBVP.solution.2}, which give rise to transform pairs associated with a combination of type~I and type~II augmented eigenfunctions. For type~I IBVP, it is possible (by appropriate contour deformations) to obtain alternative integral representations like~\eqref{eqn:introIBVP.solution.1}, which give rise to transform pairs associated with only type~I augmented eigenfunctions. Furthermore, in this case, a residue calculation yields a classical series representation, which can be associated with Gel'fand's generalised eigenfunctions.

\subsubsection*{Acknowledgement}
The research leading to these results has received funding from the European Union's Seventh Framework Programme FP7-REGPOT-2009-1 under grant agreement n$^\circ$ 245749.

\bibliographystyle{amsplain}
\bibliography{dbrefs}

\end{document}